\newtheorem{thm}{Theorem}[section]
\newtheorem{lem}[thm]{Lemma}
\theoremstyle{definition}
\theoremstyle{definition}
\theoremstyle{definition}
\newtheorem{defn}[thm]{Definition}
\journal{DCDS-S}
\renewcommand{\leq}{\leqslant}
\renewcommand{\geq}{\geqslant}
\newcommand{\cC}{C}
\newcommand{\R}{\mathbb{R}}
\newcommand{\cone}{\mathop \Sigma _{\Delta}}
\newcommand{\wedg}{\mathop \Sigma_{w}}
\newcommand{\cusp}{\mathop \Sigma_{c}}
\newcommand{\FigureCaption}{\caption}
\renewcommand{\phi}{\varphi}
\renewcommand{\epsilon}{\varepsilon}
\newcommand{\cM}{\mathcal{M}}
\newcommand{\cN}{\mathcal{N}}
\newcommand{\cQ}{\mathcal{J}}
\newcommand{\udot}{\dot{u}}
\newcommand{\thmlocalcone}{Theorem 4.1 in \cite{MH:2016}\xspace}
\begin{document}

\begin{frontmatter}

\title{Generalised Lyapunov-Razumikhin techniques for scalar state-dependent delay differential equations}

\author[Manitoba]{F.M.G. Magpantay}
\ead{felicia.magpantay@umanitoba.ca}
\author[McGill]{A.R. Humphries}
\ead{tony.humphries@mcgill.ca}

\address[Manitoba]{Department of Mathematics, University of Manitoba, 186 Dysart Road, Winnipeg, MB, Canada R3T 2N2}
\address[McGill]{Department of Mathematics and Statistics, McGill University, 805 Sherbrooke St. W., Montreal, QC, Canada H3A 0B9}

\begin{abstract}
We present generalised Lyapunov-Razumikhin techniques for establishing global asymptotic stability of steady-state solutions of scalar delay differential equations.
When global asymptotic stability cannot be established, the technique can be used to derive bounds on the persistent dynamics.
The method is applicable to constant and variable delay problems, and we illustrate the method by applying it to the state-dependent delay differential equation known as the sawtooth equation, to find parameter regions for which the steady-state solution is globally asymptotically stable. We also establish bounds on the
periodic orbits that arise when the steady-state is unstable.
This technique can be readily extended to apply to other scalar delay differential equations with negative feedback.
\end{abstract}

\begin{keyword}
delay differential equations \sep Lyapunov-Razumikhin \sep global asymptotic stability
\end{keyword}
\end{frontmatter}

\section{Introduction}
\label{Secn:Intro}
Delay differential equations (DDEs) are differential equations for which the rate of change of the state at time $t$ depends on its value at some past time $\alpha(t)\leq t$.
When $\tau$ also depends on the state of the solution, these systems are called state-dependent delay differential equations (SDDEs). A general SDDE with one discrete delay and an initial function $\varphi$ that gives the value of the state $u$ before the initial time of $t_0$, can be written as
\begin{equation}
\left. \begin{array}{ll}
\dot u(t)=f\big(t,u(t),u(\alpha(t,u(t))\big), & t> t_0,\\
u(t) = \varphi(t), & t\leqslant t_0,
\end{array}\right\}
\end{equation}
where $\alpha(t,u(t))\leq t$.
Such systems naturally arise in many applications including electrodynamics~\cite{Driver:1963}, population dynamics~\cite{MagpantayKosovalicWu:2014}, automatic position control \cite{Walther:2003} and milling~\cite{Insperger:2007}.
When the delays can be bounded~\textit{a priori}, SDDEs can be treated as retarded functional differential equations (RFDEs).  There is a well-established theory for treating RFDEs using infinite-dimensional dynamical systems on function spaces \cite{DGVLW95,GW13,Hale:1,HaleLunel:1},
however this theory is generally not directly applicable to state-dependent delay problems.
A more complete discussion of relatively recent progress in state-dependent delay equation theory and applications is available in \cite{HartungKrisztinWaltherWu:1}.


In this paper we work directly in the SDDE framework to derive a sufficient condition for global asymptotic stability of the equilibria of scalar SDDEs.
We also derive bounds to any persistent dynamics (such as periodic solutions) of the system when the equilibrium is not guaranteed to be globally asymptotically stable.

Lyapunov-Razumikhin techniques were first developed by
Razumikhin~\cite{Razumikhin:1}, who realised that to establish stability instead of defining a Lyapunov functional that decreased monotonically along the solution, it is sufficient just to consider the case where the solution is about to exit a ball centered at the steady state. Razumikhin-type results for both Lyapunov and asymptotic stability have since been presented by many authors. A detailed presentation can be found in Chapter 5 of \cite{HaleLunel:1}. Many of the results and examples are tailored to constant delay problems, but amongst others
\cite{Krisztin:3,Myshkis:1,Yorke:1} include results for time-dependent delays. In \cite{MH:2016} we present Razumikhin-type Lyapunov and asymptotic stability results tailored to SDDEs.

In this paper we generalise Lyapunov-Razumikhin techniques. 
Instead of considering the solution when it is about to exit some ball, we will consider oscillatory solutions when they achieve a local extremum to derive a sequence of bounds on the solution as $t\to\infty$.
We apply this generalised Lyapunov-Razumikhin technique to the model SDDE
\begin{equation}
\left. \begin{array}{ll}
\dot u(t)=\mu u(t)+\sigma u(t-a-cu(t)), & t> 0,\\
u(t) = \varphi(t), & t\leqslant 0.
\end{array}\right\}
\label{Eqn:1Delay}
\end{equation}
This model problem has a known parameter region $\Sigma_\star=\cone\cup\wedg\cup\cusp$ (see Definition~\ref{Defn:StabilityRegion}) in which the zero solution has been proven to be locally exponentially stable~\cite{GyoriHartung:1}. We prove global asymptotic stability of the steady-state in part of $\wedg$ in the delay-independent stability region. We demonstrate numerically that the steady state cannot be globally asymptotically stable in all of $\wedg$ because of the existence of a periodic orbit generated by a sub-critical Hopf bifurcation along part of the boundary of the stability region.
We also derive bounds for any
periodic solutions or other persistent dynamics of \eqref{Eqn:1Delay}
for parameter values outside the region where we can show global asymptotic stability, by fnding an absorbing set for the dynamics.

The constant delay case ($c=0$) of the model state-dependent DDE \eqref{Eqn:1Delay} is known as Hayes equation~\cite{Hayes50}.
This is the standard model problem used to illustrate stability theory for constant delay DDEs~\cite{HaleLunel:1,IS11,Smith:1}, and is also the test problem for stability of numerical DDE methods~\cite{BellenZennaro:1}.
The state-dependent case ($c\ne 0$) of \eqref{Eqn:1Delay} is the natural generalisation of Hayes equation to state-dependent DDEs.
This was introduced by Mallet-Paret and Nussbaum~\cite{MalletParetNussbaum:1}.
In contrast to Hayes equation which is linear, the state-dependent case of \eqref{Eqn:1Delay} is nonlinear and, in parts of the parameter space, has bounded periodic solutions.
Mallet-Paret and Nussbaum investigate the existence and form of the slowly oscillating periodic solutions of a singularly
perturbed version of \eqref{Eqn:1Delay} in detail in \cite{MalletParetNussbaum:1, MalletParetNussbaum:2, MalletParetNussbaum:3,MalletParetNussbaum:4}.
This DDE is also known as the sawtooth equation and has also been studied in \cite{HBCHM:1,Humphries:1,MH:2016,KE:1}.


This paper is divided into four sections.
In Section~\ref{Secn:Properties} we review existing stability results for the model SDDE~\eqref{Eqn:1Delay}.
In Section~\ref{Secn:DerivingBounds} we consider the model SDDE~\eqref{Eqn:1Delay} with $\mu<0$ when an \textit{a priori} bound can be established on solutions.
We show this implies that all solutions either oscillate forever or are eventually monotonic and converge to the steady state.
We then apply our generalised Lyapunov-Razumikhin technique to this equation by considering how solutions of \eqref{Eqn:1Delay} behave at local extrema to obtain recursive bounds on the solutions in Theorem~\ref{Thm:1Delay_changingbounds}.
This leads to numerically verifiable conditions for the global asymptotic stability of the steady state of \eqref{Eqn:1Delay} which are given in Theorem~\ref{Thm:GAS}.
This establishes global asymptotic stability of the steady-state solution in most of the region where we previously~\cite{MH:2016} showed local asymptotic stability (the regions are plotted and compared in Figure~\ref{Fig:GAS}(a)). We also demonstrate that the steady state is only locally asymptotically stable in part of this parameter region because of a subcritical Hopf bifurcation (see Figure~\ref{Fig:GAS}(b)).
In regions where global asymptotic stability cannot be shown, Theorem~\ref{Thm:1Delay_changingbounds} can be used to find improved bounds on periodic solutions and other persistent dynamics.
This technique is illustrated in Figure~\ref{Fig:Bounds} and the improvement on the bounds is shown in Figure~\ref{Fig:BoundsPlane}.
In Section~\ref{Secn:Conclusions} we present a brief summary of our results and comment on how they can be extended to
other DDEs.

\section{Model equation properties}
\label{Secn:Properties}

In this section we summarise known properties of the model SDDE \eqref{Eqn:1Delay}. We require $a>0$
so that the delay term $a+cu$ is positive at the trivial steady-state $u=0$.
Since the SDDE \eqref{Eqn:1Delay} is invariant under the transformation $u\mapsto -u$, $c\mapsto -c$, we consider only the case $c>0$ (there is no state-dependency and no nonlinearity if $c=0$). We also assume that $\mu+\sigma<0$, which ensures that the deviating argument $\alpha(t,u(t))=t-a-cu(t)\leq t$, i.e. the delayed term does not become an 
advance~\cite{MH:2016}.

In this work we restrict attention to the case where $\mu<0$, which in \cite{MH:2016} was shown to ensure that all solutions of the SDDE remain bounded; this is a necessary property for the techniques that we will deploy. This result is stated as Theorem~\ref{Thm:1DelayProperties} below. It is useful to define the following constants,
\begin{equation} \label{Eqn:1Delay:L0M0tau0}
M_0=-\frac{a}{c}, \qquad N_0=\frac{a\sigma}{c\mu}, \qquad \tau_0=a+cN_0, \qquad \tau=a+cN,
\end{equation}
(where $N$ is defined in the following theorem).

\begin{thm} [Existence and boundedness of solutions to the model SDDE] \label{Thm:1DelayProperties}
Let $a>0$, $c>0$ and $\mu<0$.
We also make the following assumptions:
\begin{enumerate}
\item If $0<\sigma<-\mu$ let $ M\in [M_0,0)$, $ N>0$,
and suppose that $ \phi(t)\in [M,N]$ for all $t\in[-\tau,0]$.
\item If $\sigma\leq0$ let $N=\max\{N_0,\phi(0)\}$, and suppose $\phi(t)\geq M_0$ for all $t\in[-\tau,0]$.
\end{enumerate}
Let the initial history function $\phi$ be continuous.
Then, there exists at least one solution $u\in C^1([0,\infty),\R)$ which solves \eqref{Eqn:1Delay} for all $t\geq0$.
Furthermore, any solution satisfies the bounds,
\begin{equation} \label{eq:ubd}
 u(t)\in (M,N),\quad \forall t>0.
\end{equation}
and
\begin{equation} \label{Eqn:alphabounds}
\alpha(t,u(t))=t-a-cu(t)\in(t-\tau,t),\quad \forall t>0.
\end{equation}
If $\phi$ is locally Lipschitz the solution is unique.
\end{thm}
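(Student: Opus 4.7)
The plan is to establish local existence via a step-method argument and then use differential-inequality barrier arguments at the endpoints of $(M,N)$ to prove the \emph{a priori} bound \eqref{eq:ubd}. These bounds then simultaneously force the deviating argument into $(t-\tau,t)$ and allow the local solution to be continued to all of $[0,\infty)$.

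For local existence on some $[0,\delta]$, I would invoke standard SDDE theory: since $\varphi$ is continuous and $\varphi(0)\geq M_0$ so that $a+c\varphi(0)\geq 0$, the right-hand side $\mu u(t)+\sigma u(\alpha(t,u(t)))$ is well defined using only the history data on $[-\tau,0]$ for sufficiently small $t>0$, and a Schauder-type fixed-point argument yields a $C^1$ solution. If $\varphi$ is in addition locally Lipschitz, a contraction argument --- exploiting that $\alpha(t,u)$ is Lipschitz in $u$ with constant $c$ --- gives uniqueness step-by-step.

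The heart of the proof is the bound \eqref{eq:ubd}, which I would argue by contradiction: suppose $t_1>0$ is the first time at which $u$ reaches $M$ or $N$. In case~1 ($0<\sigma<-\mu$), at $u(t_1)=N$ the delayed value lies in $[M,N]$ by the hypothesis on $\varphi$ and the definition of $t_1$, so $\dot u(t_1)\leq\mu N+\sigma N=(\mu+\sigma)N<0$, contradicting approach from below; a symmetric computation at $u(t_1)=M$ uses $M<0$ and $\mu+\sigma<0$ to give $\dot u(t_1)\geq(\mu+\sigma)M>0$. In case~2 ($\sigma\leq 0$), the lower barrier at $u=M_0$ exploits the fact that when $u(t_1)=M_0$ the delay vanishes so that $\alpha(t_1,u(t_1))=t_1$, forcing $\dot u(t_1)=(\mu+\sigma)M_0>0$ by continuity; the upper barrier at $u=N$ uses $\sigma u(\alpha)\leq \sigma M_0$ together with $\mu N+\sigma M_0\leq 0$, an inequality equivalent to $N\geq N_0$. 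Once $u(t)\in(M,N)$ is secured, \eqref{Eqn:alphabounds} follows immediately since $u(t)>M_0$ gives $\alpha(t,u(t))<t$ and $u(t)<N$ gives $\alpha(t,u(t))>t-\tau$.

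The main obstacle I anticipate is the degenerate boundary case at $u=M_0$, where the delay collapses to zero and the classical SDDE framework becomes singular: at the putative first crossing $t_1$ the quantity $u(\alpha(t_1,u(t_1)))$ must be interpreted by taking a one-sided limit from the interior of $[0,t_1)$, and the strict inequality $\dot u(t_1)>0$ then follows from continuity of $u$ at $t_1$. Once the \emph{a priori} bounds are in hand, global extension is routine: on any maximal interval $[0,T_{\max})$ the bounds force $u$, $\dot u$ and the quantity $a+cu(t)$ all to remain bounded away from their singular values, so any finite $T_{\max}$ can be passed and a solution on $[0,\infty)$ is obtained.
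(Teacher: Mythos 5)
The paper does not actually prove this theorem: its entire proof is the one-line observation that the statement is the special case $c>0$, $\mu<0$ of Theorem~3.3 in \cite{MH:2016}. So your self-contained argument is necessarily a different route, and it is essentially the right one --- the invariant-interval/barrier proof that the cited result itself rests on. Your case analysis is sound: in case~1 both endpoints repel because the delayed value lies in $[M,N]$ and $\sigma>0$ with $\mu+\sigma<0$; in case~2 you correctly identify that the history is only bounded \emph{below}, so the upper barrier must come from $\sigma u(\alpha)\leq\sigma M_0$ (which needs only $\sigma\leq0$ and $u\geq M_0$ on the relevant range) combined with $\mu N+\sigma M_0\leq0\Leftrightarrow N\geq N_0$, and that the lower barrier at $M_0$ works because the delay collapses there, making $\dot u=(\mu+\sigma)M_0>0$ independently of the history. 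You also correctly see that the bound $u(t)\leq N$ and the containment $\alpha(t,u(t))\in(t-\tau,t)$ must be established simultaneously with existence, since the latter is what keeps the deviating argument inside the domain where data is available. What the citation buys the authors is precisely the part you wave at: local existence for merely continuous $\phi$ (Schauder/Peano) and step-by-step uniqueness for Lipschitz $\phi$ with a state-dependent, possibly vanishing delay, which is genuinely fiddly and is worked out in \cite{MH:2016}.

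One technical soft spot to repair: in case~2 your upper barrier is only non-strict. When $N=N_0$ and $\sigma<0$ you get $\dot u(t_1)\leq\mu N_0+\sigma M_0=0$, with equality attainable if the delayed value equals $M_0$ exactly (which the hypothesis $\phi\geq M_0$ permits), so a first-crossing argument based on the pointwise sign of $\dot u(t_1)$ does not immediately close. The clean fix is to integrate the differential inequality $\dot u(t)\leq\mu\bigl(u(t)-N_0\bigr)$, valid while the delayed value stays $\geq M_0$, to obtain
\begin{equation*}
u(t)-N_0\;\leq\;\bigl(u(t_0)-N_0\bigr)e^{\mu(t-t_0)},
\end{equation*}
which yields $u(t)<N$ for all $t>0$ (including the edge case $\phi(0)=N$) and removes the need for a strict sign at the crossing time. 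With that adjustment your argument is complete.
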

\begin{proof}
This is a special case ($c>0$ and $\mu<0$) of Theorem 3.3 in \cite{MH:2016}.
\end{proof}

We denote by $\Sigma_\star$ the region of $(\mu,\sigma)$ parameter values for which the steady state solution $u=0$ is asymptotically stable. This is well-known in the constant delay case ($c=0$) and derived from the characteristic equation of \eqref{Eqn:1Delay} \cite{ElsgoltsNorkin:1}.
Gy\"ori and Hartung \cite{GyoriHartung:1} showed that the state-dependent case ($c\ne 0$) of \eqref{Eqn:1Delay} has the same (exponentially) asymptotic stability region, that is the interior of the parameter region is the same in the state-dependent case. On the boundary of $\Sigma_\star$ the steady-state is Lyapunov stable for the constant delay case, and the stability is delicate in the state-dependent case \cite{Stumpf:1}.

\begin{figure}[th]
\begin{center}
\subfigure[Stability region]{\includegraphics[height=2.4in]{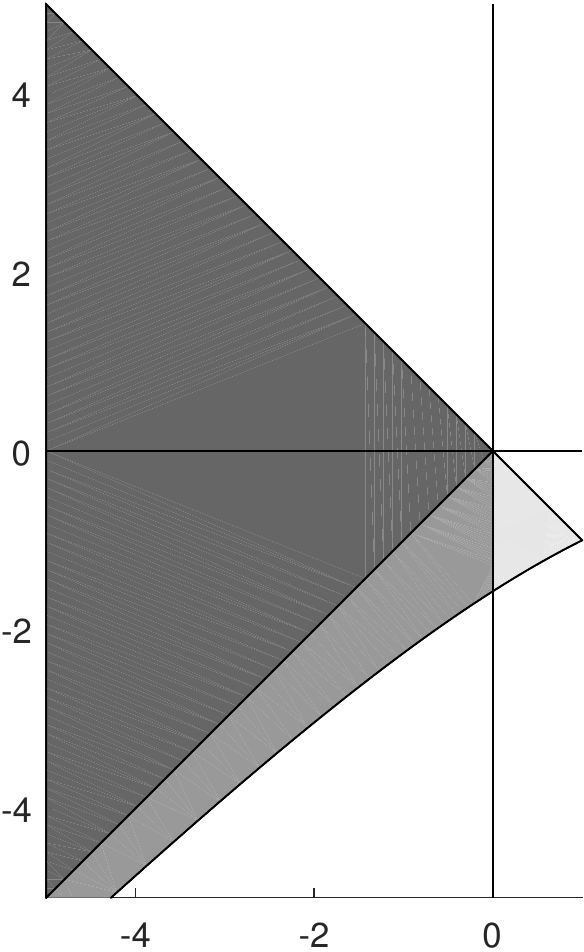}
\put(-7,0){{\footnotesize $\mu$}}
\put(-110,135){{\footnotesize $\sigma$}}
\put(-80,120){{\footnotesize $\cone$}}
\put(-11,75){{\footnotesize $\cusp$}}
\put(-40,60){{\footnotesize $\wedg$}}
}
\hspace{0.2in}
\subfigure[Sample solutions]{\includegraphics[height=2.4in]{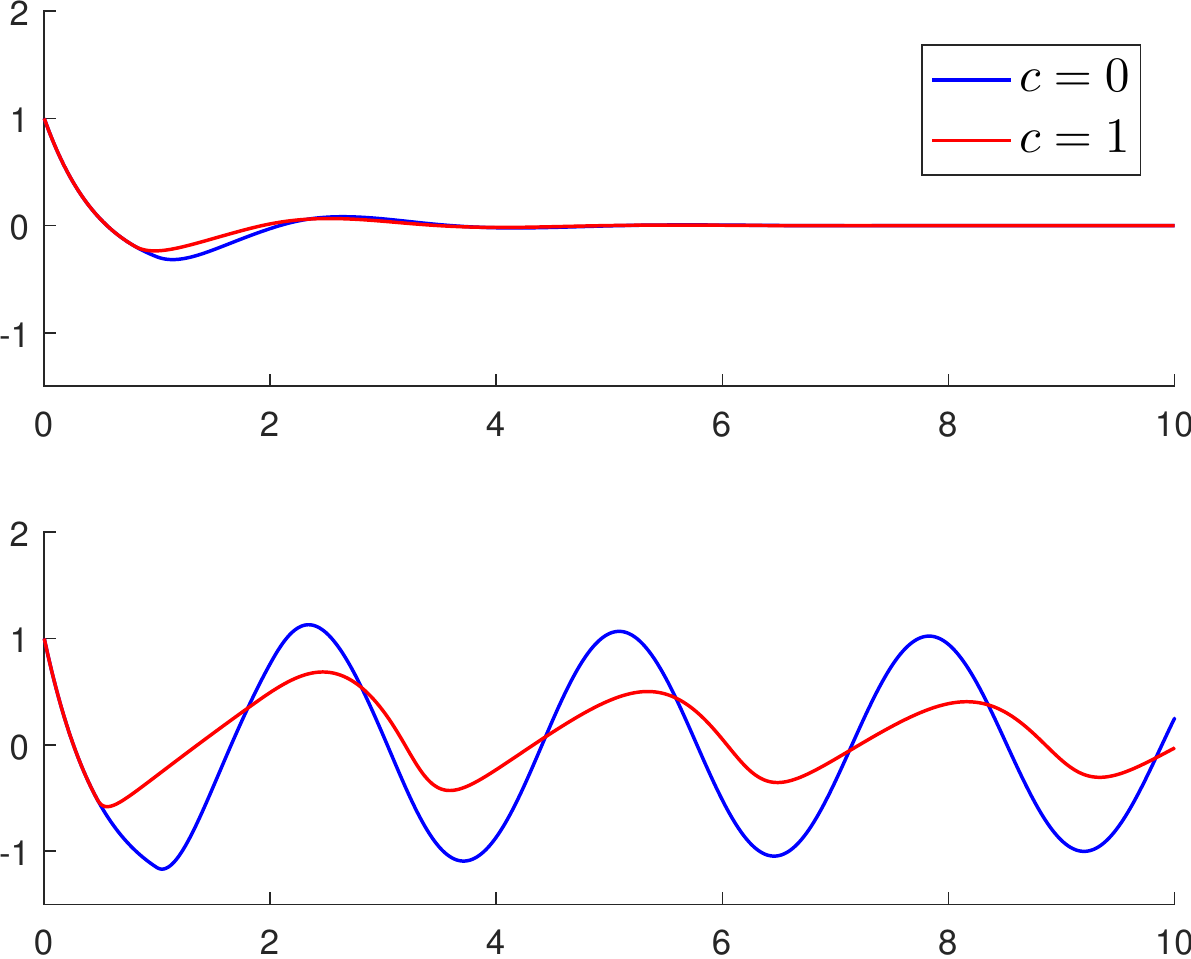}
\put(-25,-1){{\footnotesize$t$}}
\put(-25,92){{\footnotesize$t$}}
\put(-225,158){{\footnotesize$u(t)$}}
\put(-225,64){{\footnotesize$u(t)$}}
\put(-185,165){\small $\dot{u}(t)=-2u(t)-u(t-1-cu(t))$}
\put(-170,70){\small $\dot{u}(t)=-2u(t)-3u(t-1-cu(t))$}
}
\FigureCaption{\label{Fig:ConstantDelay_divisions}
(a) The analytic stability region $\Sigma_\star$ in the $(\mu,\sigma)$ plane, divided into the delay-independent cone $\cone$, and the delay-dependent wedge $\wedg$ and cusp $\cusp$. (b) Sample dynamics using parameter values in the $\cone$ (upper panel) and $\wedg$ (lower panel). Both state-dependent ($c=1$) and constant delay ($c=0$) solutions are shown in each case. The initial function is the constant function $\varphi(t)=1$ for all the examples.}
\end{center}
\end{figure}

\begin{figure}[th]
\begin{center}
\includegraphics[scale=1]{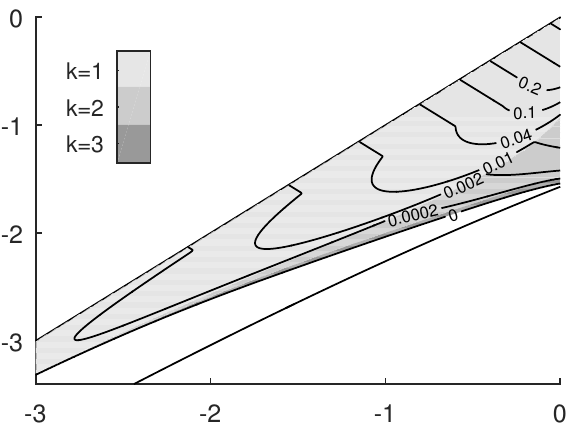}
\put(-165,100){{\footnotesize $\sigma$}}
\put(-28,0){{\footnotesize $\mu$}}
\end{center}
\caption{Lower bounds on $\delta$, with $a=c=1$ for parameter values $(\mu,\sigma)$ in parts of the wedge $\wedg$ so that the ball
$\{\phi:\|\phi\|<\delta\}$ is contained in the basin of attraction of the steady state. The SDDE is initially integrated through $k\tau$ time units so that bounds can be established on derivatives of the solution. The value of $k\in\{1,2,3\}$ which results in the largest bound is indicated.} \label{fig:delta12}
\end{figure}

\begin{defn}[Stability region $\Sigma_\star$]
\label{Defn:StabilityRegion}
Let $a>0$. Let $\Sigma_\star$ be the open set of the $(\mu,\sigma)$-parameter space between the curves
\[ \ell_\star = \bigl\{(s,-s) : s\in(-\infty,1/a]\bigr\},
\quad g_\star= \bigl\{(\mu(s),\sigma(s)) : s\in(0,\pi/a)\bigr\} \]
where the functions $\mu(s)$ and $\sigma(s)$ are given by
\begin{equation}  \label{Eq:musigma-bounds}
  \mu(s)  =  s\cot(as), \quad  \sigma(s)  = - s\csc(as).
\end{equation}
The stability region $\Sigma_\star$ is further divided into three subregions: the cone $\cone=\left\{ (\mu,\sigma) : |\sigma|<-\mu\right\}$, the wedge $\wedg=(\Sigma_\star \setminus \cone )\cap \left\{\mu< 0\right\}$ and the cusp $\cusp=\Sigma_\star\cap \left\{\mu\geq0\right\}$.
\end{defn}

The cone $\cone$ is called the delay-independent stability region because this does not change when the value of $a$ is changed. The remaining region $\wedg\cup\cusp$ is referred to as the delay-dependent stability region. These regions are illustrated in Figure~\ref{Fig:ConstantDelay_divisions}.

In \cite{MH:2016} Lyapunov-Razumikhin techniques were applied to the SDDE \eqref{Eqn:1Delay} to show asymptotic stability in parts of the wedge $\wedg$ and the cusp $\cusp$. Those results establish a lower bound on the radius $\delta$ of the largest ball contained in the basin of attraction of the steady state, specifically it is shown that for initial functions $\phi$ with $\|\phi\|<\delta$ where $\|\phi\|=\sup_{\theta\in[-r,0]}|\phi(\theta)|$ that $\lim_{t\to\infty}u(t)=0$. The resulting bounds within the cusp $\cusp$ are shown in Figure 9 in \cite{MH:2016}. Here, in Figure 2, we show the resulting bounds for $\delta$ within the wedge $\wedg$. While $\delta$ is close to $a/c$ near to $(\mu,\sigma)=(0,0)$ (the $\delta=0.75$ contour can be seen), away from this point $\delta$ decreases rapidly to be close to $0$ in much of the upper part of $\wedg$, while near the lower boundary of $\wedg$ it is not possible to establish asymptotic stability (or a $\delta>0$) with the Lyapunov-Razumikhin method of \cite{MH:2016}.

\section{Global asymptotic stability and bounds on periodic orbits}
\label{Secn:DerivingBounds}
In this section we formulate our generalised Lyapunov-Razumikhin technique and apply it to
the model SDDE~\eqref{Eqn:1Delay} with $a,c>0$ when $\mu<0$ so that Theorem~\ref{Thm:1DelayProperties} ensures that solutions remain bounded within the interval $(M,N)$.

In \cite{MH:2016} we proved Lyapunov-Razumikhin stability results for SDDEs by considering solutions about to exit some ball and deriving a contradiction. Those results relied on three main ideas. Firstly,
we used a Lipschitz condition on $f$ to bound the solution up to some finite time $t^*$.
Secondly, for $t>t^*$ we replace the DDE by a non-autonomous auxiliary ordinary differential equation (ODE) where the delayed term $u(\alpha(t,u(t))$ is replaced by a source term $\eta(t)$ in the ODE formulation.
The source term is required to satisfy constraints that can be derived from properties of $u(\alpha(t,u(t)))$, which in turn follow from properties of the solution $u(t)$ up to the given time $t^*$. This leads to a Lyapunov stability result for SDDEs similar in spirit to the constant delay result of Barnea~\cite{Barnea:1}. Thirdly, we established local asymptotic stability by showing that the existence of a trajectory not asymptotic to the steady-state would result in a contradiction. This last result is quite different to previous Lyapunov-Razumikhin asymptotic stability results, such as those described in Chapter 5 of \cite{HaleLunel:1}, which typically show uniform asymptotic stability, but which require auxiliary functions to enforce the contraction.


In the current work we will consider scalar SDDEs with bounded solutions for which the
solution must either ultimately converge monotonically to an equilibrium, or go through an infinite sequence of alternating relative maxima and minima. Instead of considering solutions about to escape some ball, we generalise
our previous Lyapunov-Razumikhin methods to consider the solutions at their local extrema. We will
replace the DDE by an ODE over an interval $[t^*-\tau,t^*]$ where
$u(t^*)$ is a local extremum of the solution, and $\tau$ is an upper bound on the delay. This will enable us to derive a sequence of bounds on possible relative maxima and minima of the solution. The limit of these bounds yield the bounds on persistent dynamics of the system. If the limit of the upper bounds coincide with the limit of the lower bonds, this yields the global asymptotic stability of the equilibrium.


In Theorem~\ref{Thm:cone} we show that the steady state of \eqref{Eqn:1Delay}
is globally asymptotically stable in the cone $\cone$, the delay-independent stability region.
In Theorem~\ref{Thm:1Delay_changingbounds} we prove recursive bounds on solutions which can be used to bound all the recurrent dynamics, and in
Theorem~\ref{Thm:GAS} establish a condition for global asymptotic stability that can be numerically verified.

We first establish global asymptotic stability of the steady state of \eqref{Eqn:1Delay} for $(\mu,\sigma)\in\cone$, the delay-independent stability region. This is straightforward, and can be proved several different ways.
Recall \eqref{Eqn:1Delay:L0M0tau0} where $M_0$, $N_0$ $\tau_0$ and $\tau$ were defined.

\begin{thm}[Global Asymptotic stability for \eqref{Eqn:1Delay} in $\cone$]  \label{Thm:cone}
Let $a,c>0$, $\mu<-|\sigma|$, so $(\mu,\sigma)\in\cone$.
If $\sigma>0$ let $N\geq0$ and suppose $\varphi(t)\in[M_0,N]$ and continuous for $t\in[-\tau,0]$.
If $\sigma\leq0$ let $N=\max\{\phi(0),N_0\}$ and suppose $\varphi(t)\geq M_0$ and continuous for $t\in[-\tau,0]$.
Then any solution $u(t)$ to \eqref{Eqn:1Delay} satisfies
$u(t)\in(M_0,N)$ for all $t>0$ and
$u(t)\to 0$ as $t\to\infty$.
\end{thm}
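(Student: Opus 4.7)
The plan is to combine the boundedness result Theorem~\ref{Thm:1DelayProperties} (which gives $u(t)\in(M_0,N)$ directly upon taking $M=M_0$ in the case $\sigma>0$, and follows verbatim in the case $\sigma\leq 0$) with a dichotomy on the asymptotic behaviour of $u$ and a Razumikhin-style estimate at extrema. Since $u$ is scalar, continuous, and bounded, either $u$ is eventually monotonic or $u$ has a sequence of local extrema escaping to $+\infty$; the two cases are handled separately.

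In the eventually monotonic case, boundedness forces $u(t)\to u^*$ for some $u^*\in[M_0,N]$. Because $\alpha(t,u(t))\in(t-\tau,t)$ by \eqref{Eqn:alphabounds}, also $u(\alpha(t,u(t)))\to u^*$, so the right-hand side of \eqref{Eqn:1Delay} converges to $(\mu+\sigma)u^*$. If this limit were nonzero, $u$ would be unbounded by integration, contradicting Theorem~\ref{Thm:1DelayProperties}; since $\mu+\sigma<0$, this forces $u^*=0$. In the oscillatory case, set $L=\limsup_{t\to\infty}|u(t)|$. Between consecutive local extrema $u$ is monotone, so $|u|$ attains its supremum on such an interval at an endpoint extremum (or at a zero, which does not affect the limsup); hence one may extract a sequence of local extrema $t_n\to\infty$ with $|u(t_n)|\to L$. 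At any such extremum $\dot u(t_n)=0$, so \eqref{Eqn:1Delay} gives
\begin{equation*}
u(t_n)=-\frac{\sigma}{\mu}\,u(\alpha(t_n,u(t_n))),
\qquad |u(t_n)|\leq \frac{|\sigma|}{-\mu}\,|u(\alpha(t_n,u(t_n)))|.
\end{equation*}
Since $\alpha(t_n,u(t_n))>t_n-\tau\to\infty$, for every $\epsilon>0$ eventually $|u(\alpha(t_n,u(t_n)))|\leq L+\epsilon$, and letting $n\to\infty$ then $\epsilon\to 0$ yields $L\leq \tfrac{|\sigma|}{-\mu}L$. The hypothesis $(\mu,\sigma)\in\cone$ gives $|\sigma|/(-\mu)<1$, so $L=0$ and $u(t)\to 0$.

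The only step that needs real care is the justification that $L$ is attained along a sequence of genuine local extrema in the oscillatory case; this is where the scalar nature of the equation and the use of the cone hypothesis meet. Everything else is a one-line consequence of the equation and Theorem~\ref{Thm:1DelayProperties}. The proof is short because the delay-independent cone condition $\mu<-|\sigma|$ yields a strict contraction at any stationary point of $u$ without requiring the more delicate recursive-bound machinery introduced in Theorem~\ref{Thm:1Delay_changingbounds} for the wedge.
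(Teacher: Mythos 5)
Your proof is correct, but it takes a genuinely different route from the paper's. After the common first step (Theorem~\ref{Thm:1DelayProperties} gives $u(t)\in(M_0,N)$), the paper does not argue from scratch: it invokes \thmlocalcone, which yields $u(t)\to0$ once $|u(t)|\leq|M_0|$ holds for all large $t$, and then the only remaining work is to show the solution eventually enters that ball, done by iterating a one-step contraction of the upper bound (using $|N_0|<|M_0|$, which holds precisely because $|\sigma|<|\mu|$). You instead give a self-contained argument: the monotone/oscillatory dichotomy, the limit identification $(\mu+\sigma)u^*=0$ in the monotone case, and in the oscillatory case the Razumikhin identity $u(t_n)=-(\sigma/\mu)\,u(\alpha(t_n,u(t_n)))$ at extrema combined with $|\sigma|/(-\mu)<1$ to force $\limsup_{t\to\infty}|u(t)|=0$. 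This is in effect the degenerate instance of the paper's own extremum-based machinery of Section~\ref{Secn:DerivingBounds}, and it buys independence from the external local-stability theorem; the paper's version buys a shorter argument that avoids extracting extrema realizing the limsup. On that one point, which you rightly flag, your phrasing ``between consecutive local extrema'' presupposes that the critical set is discrete, which need not hold; the standard patch is: writing $L^+=\limsup u$, either $u(t)\to L^+$ (excluded for $L^+>0$ exactly as in your monotone case, since then $\dot u(t)\to(\mu+\sigma)L^+\neq0$), or one can choose $t_1<t_2<t_3\to\infty$ with $u(t_2)>L^+-\epsilon>\max\{u(t_1),u(t_3)\}$ and take $t_n$ to be an interior maximizer of $u$ on $[t_1,t_3]$, so that $\dot u(t_n)=0$ and $u(t_n)\to L^+$, with the symmetric construction for $\liminf u$. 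With that insertion the argument is complete.
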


\begin{proof}
Theorem~\ref{Thm:1DelayProperties} ensures that $u(t)\in (M_0,N)$ for all $t>0$. \thmlocalcone shows that
$u(t)\to0$ as $t\to\infty$ provided $|u(t)|\leq|M_0|$ for all $t$ sufficiently large.
Since $|\sigma|<|\mu|$ we have $|N_0|<|M_0|$. Thus if $N\leq |M_0|$ the
result follows from \thmlocalcone. So suppose $N>|M_0|$.

If $\sigma\leq0$ and $u(t)\geq|M_0|$ then
$\dot{u}(t)\leq(\mu+\sigma)|M_0|<0$ so for all $t$ sufficiently large $u(t)<|M_0|$ and the result follows.

If $\sigma>0$ then $u(t)\geq\tfrac12(1-\sigma/\mu)N$ implies $\dot{u}(t)\leq\tfrac12(\mu+\sigma)N<0$, and hence
there exists $T_1>0$ such that $u(t)\leq\tfrac12(1-\sigma/\mu)N$ for all $t\geq T_1$,
where $\mu<-|\sigma|$ implies $\tfrac12(1-\sigma/\mu)\in[1/2,1)$.
Then
$u(t-a-cu(t))$ satisfies the same bound for $t\geq T+\tau$, and repeating the same argument
we find a sequence $\{T_k\}$ 
such that $u(t)\leq\tfrac{1}{2^k}(1-\sigma/\mu)^kN$ for $t\geq T_k$. Hence for all
$t$ sufficiently large $u(t)\leq |M_0|$ and the result follows.
\end{proof}


Since Theorem~\ref{Thm:1DelayProperties} ensures that
solutions to \eqref{Eqn:1Delay} remain bounded, we can show that there are only two possible
solution behaviours when $\mu$ and $\sigma$ are both negative.

\begin{lem}\label{Lem:1Delay_solutionbehaviour}
Let $a,c>0$, $\mu<0$ and $\sigma<0$.
Let $\varphi(t)\geq M_0$
be continuous for $t\in[-\tau,0]$ where $\tau=a+cN$ and $N=\max\{\phi(0),N_0\}$.
Then every solution $u(t)$ to \eqref{Eqn:1Delay} must be behave in one of the following manners:
\begin{enumerate}[(A)]
\item There exists $\bar T$ such that $u(t) \to  0$ monotonically for $t>\bar T$
\item For every $T>0$ there exists $T_1, T_2>T$ such that the solution attains a positive local maximum at $T_1$ and a negative local minimum at $T_2$. Furthermore, there exists $t\in[T,T+\tau]$ such that $u(t)\dot u(t)<0$.
\end{enumerate}
\end{lem}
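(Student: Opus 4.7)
The plan is to exploit the \emph{a priori} bounds from Theorem~\ref{Thm:1DelayProperties} together with the signs $\mu,\sigma<0$ to force a rigid dichotomy on the long-time behaviour of $u$. I would first invoke Theorem~\ref{Thm:1DelayProperties} to obtain $u\in C^1([0,\infty),\R)$ with $u(t)\in(M_0,N)$ and $\alpha(t,u(t))\in(t-\tau,t)$ for all $t>0$; the key consequence is the uniform bound $0<a+cu(t)<\tau$. I would then split on the eventual sign of $u$: either there is a $T^*$ with $u(t)\geq 0$ (or $u(t)\leq 0$) for all $t\geq T^*$, or $u$ takes both strictly positive and strictly negative values at arbitrarily large times.

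In the constant-sign case, treat $u\geq 0$ for $t\geq T^*$; the other is symmetric. For $t>T^*+\tau$ we have $\alpha(t)>t-\tau>T^*$ and hence $u(\alpha(t))\geq 0$; since $\mu,\sigma<0$ this gives $\dot u(t)=\mu u(t)+\sigma u(\alpha(t))\leq 0$. So $u$ is non-increasing, bounded below by $0$, and converges to some $L\geq 0$. Because $\alpha(t)\to\infty$, both $u(t)$ and $u(\alpha(t))$ tend to $L$, so $\dot u(t)\to(\mu+\sigma)L$; boundedness of $u$ forces $(\mu+\sigma)L=0$ and hence $L=0$, placing the solution in case (A).

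In the remaining case, the intermediate value theorem produces zeros of $u$ past every $T$, and between consecutive zeros $u$ has constant sign. On any maximal open interval where $u>0$ the supremum is attained at an interior critical point that is a positive local maximum, and analogously for intervals where $u<0$, yielding the first clause of (B). For the ``furthermore'' clause, fix $T>0$ and suppose for contradiction that $u(t)\dot u(t)\geq 0$ throughout $[T,T+\tau]$. Then $u^2$, and hence $|u|$, is non-decreasing on the interval, so $u$ can change sign there only by being identically zero on an initial subinterval and thereafter taking one constant sign. If $u(T+\tau)>0$, then $u\geq 0$ on $[T,T+\tau]$ and $\dot u(T+\tau)\geq 0$; rearranging $\mu u(T+\tau)+\sigma u(\alpha(T+\tau))\geq 0$ with $\mu,\sigma<0$ and $u(T+\tau)>0$ forces $u(\alpha(T+\tau))<0$. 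But the uniform bound yields $\alpha(T+\tau)\in(T,T+\tau)$, where $u\geq 0$---a contradiction. The case $u(T+\tau)<0$ is symmetric; if $u\equiv 0$ on $[T,T+\tau]$ then for $t$ just beyond $T+\tau$ the delayed argument remains in this zero set by continuity, so the equation reduces to the linear ODE $\dot u=\mu u$ with $u(T+\tau)=0$, propagating $u\equiv 0$ forward by uniqueness of the scalar linear ODE and contradicting the sign-changing hypothesis.

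The main obstacle I anticipate is this ``furthermore'' clause: the contradiction depends both on the tight containment $\alpha(T+\tau)\in(T,T+\tau)$ (from Theorem~\ref{Thm:1DelayProperties}) and on the sign implication $\dot u(T+\tau)\geq 0\Rightarrow u(\alpha(T+\tau))<0$ (from $\mu,\sigma<0$); losing either ingredient would collapse the argument.
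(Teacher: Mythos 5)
Your proof is correct and follows essentially the same route as the paper's: invoke Theorem~\ref{Thm:1DelayProperties} for the bounds \eqref{eq:ubd}--\eqref{Eqn:alphabounds}, split on whether $u$ is eventually of one sign, use monotone convergence plus $\dot u(t)\to(\mu+\sigma)\bar u$ in the constant-sign case, and derive the ``furthermore'' clause from the sign contradiction in \eqref{Eqn:1Delay} at the endpoint of an interval of length $\tau$ on which $u\dot u\geq 0$. Your explicit treatment of the degenerate case $u\equiv 0$ on the interval (via forward propagation of the zero solution) is a detail the paper's one-line ``all have the same sign'' argument glosses over, and is a worthwhile addition.
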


\begin{proof}
Theorem~\ref{Thm:1DelayProperties} ensures that $u(t)\in (M_0,N)$ for all $t>0$
and $\alpha(t,u(t))=t-a-cu(t)\in(t-\tau,t)$.
Suppose there exists a time $T$ such that $u(t)$ does not change sign for $t\geq T$. Suppose first
$u(t)\geqslant 0$ for all $t>T$. Then for $t>\bar T= T+\tau$, we have $\dot u(t)\leqslant 0$ because of \eqref{Eqn:alphabounds}. Hence there is some $\bar u \in[0,N)$ such that
$u(t) \downarrow \bar u$ when $t>\bar T$. Again because of \eqref{Eqn:alphabounds} we must
also have $u(t-a-cu(t))\to \bar u$. This implies $\dot u(t)\to(\mu +\sigma)\bar u$ which is
only possible if $\bar u=0$. Thus in this case, we have the behaviour in (A). The case $u(t)\leq0$
for all $t>T$ is similar.

If there is no time $T$ such either $u(t)\geqslant 0$ or $u(t)\leqslant 0$ for all $t\geqslant T$ then the solution must always be changing signs which yields the behaviour given by the first part of (B).
The second part is proved by observing that if
$u(t)\dot u(t)\geq0$
for all $t\in[T-\tau,T]$
then $\dot{u}(T)$, $u(T)$ and $u(T-a-cu(T))$ would all have the same sign, which
yields an immediate contradiction from \eqref{Eqn:1Delay}.
\end{proof}


Theorem~\ref{Thm:1DelayProperties} and Lemma~\ref{Lem:1Delay_solutionbehaviour} ensure that for $a>0$ , $c>0$, $\mu<0$, $\sigma<0$, solutions to \eqref{Eqn:1Delay} are bounded and either converge to the steady-state or oscillate forever.
Since Theorem~\ref{Thm:cone} already deals with the case of $(\mu,\sigma)\in\cone$,
in the rest of this section we consider $\sigma\leq\mu<0$, which includes the wedge $\wedg$.
The steps that we will take to find bounds on solutions are related to the Lyapunov-Razumikhin techniques we applied in~\cite{MH:2016}, except
instead of considering solutions which are about to exit some ball,
using Lemma~\ref{Lem:1Delay_solutionbehaviour}(B) we consider solutions that achieve a local extremum.

Integrating \eqref{Eqn:1Delay} from $t_0$ to $t$ yields,
\begin{equation}u(t) = u(t_0)e^{ \mu (t-t_0)}  + \sigma \int_{- (t-t_0)}^{0} {e^{-\mu \theta } u\left( \theta+t-a -cu(\theta+t)\right)d\theta}. \label{Eqn:BoundsIntegration0}
\end{equation}
Let $k=1$ or $2$. Suppose that for some
$M\in[M_0,0)$ and $N>0$, $u(s)\in[M,N]$ for $s\in[t-(k+1)\tau,t]$. Suppose also that $u(t)=v$,
a local extremum of the solution. Then $\dot u(t)=0$ and from \eqref{Eqn:1Delay}
\begin{equation}  \label{Eqn:Bounds:EtaConditionA}
u(t-a-cv) = -\frac{\mu}{\sigma} v.
\end{equation}
Let $t_0=t-a-cv$. Then \eqref{Eqn:BoundsIntegration0} becomes
\begin{equation} \label{Eqn:BoundsIntegration1}
v = -\frac{\mu}{\sigma} v e^{\mu \left(a+cv\right)}
+ \sigma \int_{- \left(a+cv\right)}^{0} {e^{-\mu \theta } u\left( \theta+t-a -cu(\theta+t)\right)d\theta}.
\end{equation}
Since $\theta +t-a-cu(t+\theta)\geqslant t-(k+1)\tau$ for all $\theta \in[-a-cv,0]$ for $k\geq1$,
it follows that $u(\theta +t-a-cu(t+\theta))\in[M,N]$ for all $\theta \in[-a-cv,0]$. If $k\geq2$
from the bounds on the solution we also obtain
\begin{align} \notag
& \left|\tfrac{d}{d\theta} u\bigl(\theta+t-a -cu(\theta+t)\bigr)\right|
 = \bigl|1-c\udot(\theta+t)\bigr|\;\bigl|\udot\bigl(\theta+t-a -cu(\theta+t)\bigr)\bigr| \\
& \qquad\quad \leqslant (|\mu|+|\sigma|)\bigl[ 1 + (|\mu|+|\sigma|)\max(|M|,|N|)c \bigr]\max(|M|,|N|),
\text{ for }\theta\in[-a-cv,0].  \label{Eqn:Bounds:EtaConditionB}
\end{align}
Note also that when $u(t)=v$ and $\theta=0$
$$u\left( \theta+t-a -cu(\theta+t)\right) = u(t-a-cu(t))=-\frac{\mu}{\sigma} v.$$
This leads us to define functions $H_{(k)}(v,x,y)(\theta)$ to bound the integrand terms in \eqref{Eqn:BoundsIntegration1}.
For $a>0$, $c>0$ and $\sigma\leqslant\mu<0$, for $\theta\in[-\tau,0]$ we let 
\begin{gather} \label{Eqn:H1vxy}
H_{(1)}(v,x,y)(\theta) = \left\{ \begin{array}{ll}
\phantom{-} 
x, & \theta <0,  \\
-\frac{\mu}{\sigma}v, & \theta=0,
 \end{array}  \right.  \\  \label{Eqn:H2vxy}
H_{(2)}(v,x,y)(\theta)
= \left\{ \begin{array}{ll}
\phantom{-}  
x, &
\theta  \leqslant \text{sign}(y-x)\frac{x +\mu v/\sigma}{D(x,y)},  \\
 - \frac{\mu }{\sigma }v +\text{sign}(y-x)D(x,y)\theta ,
& \theta  > \text{sign}(y-x)\frac{x +\mu v/\sigma}{D(x,y)},  \\
 \end{array}  \right.
\end{gather}
where we take $\text{sign}(0)=0$ and
\begin{equation} \label{Eqn:Dxy}
D(x,y) = (|\mu|+|\sigma|)\bigl[ 1+(|\mu|+|\sigma|)\max(|x|,|y|)c\bigr]\max(|x|,|y|).
\end{equation}
The functions $H_{(k)}(v,M,N)(\theta)$ and $H_{(k)}(v,N,M)(\theta)$ are respectively the most negative and positive possible forms of $u\bigl(\theta+t-a-cu(\theta+t)\bigr)$
which satisfy \eqref{Eqn:Bounds:EtaConditionA}, and
when $k=2$ also satisfy the derivative bound \eqref{Eqn:Bounds:EtaConditionB}. 

We will use the functions $H_{(k)}(v,x,y)(\theta)$ below to bound the the possible dynamics of solutions. We formalised the use of source terms with suitable constraints to replace the delayed terms for SDDEs in \cite{MH:2016}, 
where the functions $\eta_{(k)}(\theta)$ correspond to $H_{(k)}(\delta,-\delta,\delta)(\theta)$. But, the use of source terms to replace delay terms has a very long history in DDE theory. For Lyapunov-Razumikhin results, Barnea~\cite{Barnea:1} already used similar functions and bounds when studying stability of Hayes equation, the constant delay version of \eqref{Eqn:1Delay} with $c=0$.


Suppose now that $u(t)=v$ is a negative relative minimum, then
\begin{equation} \label{Eqn:relmin}
v \geqslant  - \frac{\mu}{\sigma}ve^{\mu(a + cv)}
+ \sigma \int_{ - a - cv}^0 e^{-\mu\theta} H_{(k)}(v,N,M)(\theta)d\theta.
\end{equation}
If instead $u(t)=v$ is a positive relative maximum then we obtain
\begin{equation} \label{Eqn:relmax}
v \leqslant  - \frac{\mu}{\sigma}ve^{\mu(a + cv)}
+ \sigma \int_{ - a - cv}^0 e^{-\mu\theta} H_{(k)}(v,M,N)(\theta)d\theta .
\end{equation}
Now let
\begin{equation} \label{Eqn:Hkvxy}
\cQ_{(k)}(v,x,y) = v + \frac{\mu}{\sigma}ve^{\mu(a + cv)}
- \sigma \int_{ - a - cv}^0 {e^{-\mu\theta}H_{(k)}(v,x,y)(\theta)d\theta }.
\end{equation}
Then \eqref{Eqn:relmin} and \eqref{Eqn:relmax} can be written as $\cQ_{(k)}(v,N,M)\geqslant 0$ and $\cQ_{(k)}(v,M,N)\leqslant 0$ respectively.



It follows from \eqref{Eqn:H1vxy} and \eqref{Eqn:H2vxy} that
$H_{(1)}(v,x,y)(\theta)\leq H_{(2)}(v,x,y)(\theta)$ if $x\leq-\mu v/\sigma$. Hence if $v\leq-\sigma x/\mu$ we have
\begin{equation} \label{Eqn:H2>H1}
\cQ_{(2)}(v,x,y)\geqslant \cQ_{(1)}(v,x,y).
\end{equation}
Similarly, if $v\geq-\sigma x/\mu$ then
\begin{equation} \label{Eqn:H2<H1}
\cQ_{(2)}(v,x,y)\leqslant \cQ_{(1)}(v,x,y).
\end{equation}
Substituting \eqref{Eqn:H1vxy} into \eqref{Eqn:Hkvxy} we obtain
\begin{equation}  \label{Eqn:H1}
\cQ_{(1)}(v,x,y) =\left(1+\frac{\mu}{\sigma}  e^{\mu (a+cv)} \right)v+\frac{\sigma }
{\mu }x\left(1-e^{\mu(a+cv)}\right).
\end{equation}
Substituting \eqref{Eqn:H2vxy} into \eqref{Eqn:Hkvxy} there are two cases to consider.
If $-\text{sign}(y-x)\frac{{x +\tfrac{\mu }{\sigma }v}}{{D(x,y)}}< a+cv$
\begin{equation} \label{Eqn:H2_2part}
\cQ_{(2)}(v,x,y)
=\frac{\mu}{\sigma} v e^{\mu(a+cv)}
-\frac{\sigma}{\mu}\biggl[ {\text{sign}(y-x)\frac{D(x,y)}{\mu}
\Bigl( {e^{-\mu\text{sign}(y-x)\tfrac{{x +\tfrac{\mu}{\sigma}v}}{{D(x,y)}}}  - 1} \Bigr) +x e^{\mu (a+cv)} }\biggr],
\end{equation}
while if $-\text{sign}(y-x)\frac{{x +\tfrac{\mu}{\sigma}v}}{{D(x,y)}}\geqslant a+cv$
\begin{equation} \label{Eqn:H2_1part}
\cQ_{(2)}(v,x,y)
 = v e^{\mu(a+cv)}\left(1+\frac{\mu}{\sigma}\right)
 - \text{sign}(y-x)\frac{\sigma}{\mu}D(x,y)
 \left[\frac{1}{\mu}\left( {e^{\mu (a+cv)}  - 1} \right) - \left(a+cv\right) e^{\mu (a+cv) } \right].
\end{equation}

A version of the function $\cQ_{(1)}(v,x,y)$ was already introduced in the more general case of multiple state-dependent delays in Section 3 of \cite{Humphries:1}.
We will use $\cQ_{(k)}(v,x,y) $ to derive bounds on the solutions of \eqref{Eqn:1Delay}. Given an
initial lower bound $M$ and upper bound $N$ for the solution, we can derive a new lower
bound $\mathcal{M}_{(k)}(N,M)$ and upper bound $\mathcal{N}_{(k)}(M,N)$
which are valid after some time,
where $\mathcal{M}_{(k)}$ and
$\mathcal{N}_{(k)}$ are functions defined implicitly using $\cQ_{(k)}(v,x,y)$ in Lemmas~\ref{Lem:H1}--\ref{Lem:H2}.
The process by which the bounds can be applied
is described in Theorem~\ref{Thm:1Delay_changingbounds}. If these shrinking bounds converge to the same limit as
$t\to\infty$, global asymptotic stability of the steady state
is obtained, as described in Theorem~\ref{Thm:GAS}.
To prove these theorems, we first need to consider the derivatives of $\cQ_{(k)}(v,x,y)$ in
Lemma~\ref{Lem:Hderivatives} and derive the $\mathcal{M}_{(k)}$ and $\mathcal{N}_{(k)}$ functions
in Lemma~\ref{Lem:H1} (for $k=1$) and Lemma~\ref{Lem:H2} (for $k=2$).

\begin{lem} \label{Lem:Hderivatives}
Let $a>0$, $c>0$ and $\sigma\leqslant \mu<0$.
The functions $\cQ_{(1)}(v,x,y)$, $\cQ_{(2)}(v,x,y)$ and the derivatives
\begin{align} \label{Eqn:ddvH1}
&\frac{\partial }{{\partial v}}\cQ_{(1)}(v,x,y) = 1 + \frac{\mu }
{\sigma }\left( 1 + \mu c v - \tfrac{\sigma^2 c}{\mu}x \right)e^{ \mu \left( {a + cv} \right)},
\\ \notag
&\frac{\partial }
{{\partial v}}\cQ_{(2)} (v,x,y) \\ \notag
&\; =\left\{
\begin{array}{ll}
\rule[-8pt]{0pt}{10pt}\frac{\mu}{\sigma}\left( 1 + \mu cv - \tfrac{\sigma ^2 cx}{\mu } \right)
e^{\mu(a + cv)}  + e^{- \mu\; \mathrm{sign} (y-x)\tfrac{{x + \frac{\mu}{\sigma}v}}{D(x,y)}},
& -\mathrm{sign} (y-x)\tfrac{{x + \tfrac{\mu}{\sigma}v}}
{{D(x,y)}}< a+cv, \\
\left[ ( 1 + \mu cv )
\left( 1 + \frac{\mu}{\sigma} \right) + \mathrm{sign}(y-x)\sigma cD(x,y)(a + cv) \right]
e^{\mu(a + cv)},
 &-\mathrm{sign}(y-x)\tfrac{{x + \tfrac{\mu}{\sigma}v}}
{{D(x,y)}}\geqslant a+cv.
\end{array}
\right.
\end{align}
are continuous in the subsets of $\mathbb{R}^3$ where either of the following cases hold:
\begin{enumerate}[i)]
\item $v\in[y,0]$, $x\in[0,N_0]$ and $y\in[M_0,0]$,
\item $v\in[0,y]$, $x\in[M_0,0]$ and $y\in[0,N_0]$.
\end{enumerate}
In these subsets, we also note that $-\mathrm{sign}(y-x)\frac{x+\frac{\mu}{\sigma}v}{D(x,y)}\geq0$.
\end{lem}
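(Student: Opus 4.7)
The proof is a direct verification using the closed-form expressions for $\cQ_{(1)}$, $\cQ_{(2)}$, and their $v$-derivatives. I would proceed in three phases.

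\emph{Phase one ($\cQ_{(1)}$).} The formulas \eqref{Eqn:H1} for $\cQ_{(1)}$ and \eqref{Eqn:ddvH1} for $\partial_v\cQ_{(1)}$ are $C^\infty$ compositions of polynomials and the exponential, hence continuous on all of $\R^3$, so the $k=1$ assertions hold trivially on both subsets.

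\emph{Phase two ($\cQ_{(2)}$).} Away from $\{x=y=0\}$ the denominator $D(x,y)$ from \eqref{Eqn:Dxy} is strictly positive, and each of \eqref{Eqn:H2_2part} and \eqref{Eqn:H2_1part} is smooth on the open set where its branch applies. The nontrivial step is verifying that the two formulas agree on the common boundary $-\mathrm{sign}(y-x)\frac{x+\mu v/\sigma}{D(x,y)} = a+cv$. Substituting this equality collapses the exponent $-\mu\,\mathrm{sign}(y-x)\frac{x+\mu v/\sigma}{D}$ appearing in \eqref{Eqn:H2_2part} to $\mu(a+cv)$, and the identity $\mathrm{sign}(y-x)\,D(x,y)(a+cv) = -(x+\mu v/\sigma)$ (obtained by multiplying the boundary condition by $D\,\mathrm{sign}(y-x)$ and using $\mathrm{sign}(y-x)^2=1$) enables a short algebraic simplification that reduces \eqref{Eqn:H2_2part} to \eqref{Eqn:H2_1part}. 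The remaining concern is the degenerate limit $(v,x,y)\to(0,0,0)$: the constraints $v\in[y,0]$ in case (i) and $v\in[0,y]$ in case (ii) force $v=0$ whenever $x=y=0$, and the bound $|\mu v/\sigma|\leq\max(|x|,|y|)$ (using $|\mu/\sigma|\leq 1$, which follows from $\sigma\leq\mu<0$) combined with $D(x,y)\geq(|\mu|+|\sigma|)\max(|x|,|y|)$ shows that $(x+\mu v/\sigma)/D$ stays bounded while the prefactor $D$ in \eqref{Eqn:H2_2part} tends to zero, so both branches approach the common value $0$.

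\emph{Phase three ($\partial_v\cQ_{(2)}$).} Differentiating \eqref{Eqn:H2_2part} and \eqref{Eqn:H2_1part} term by term in $v$ produces the two-case expression displayed in the lemma; no jump contribution arises at the switching boundary because the two branches coincide there. Continuity within each branch is immediate from smoothness, and agreement across the boundary is checked by the same substitution used for $\cQ_{(2)}$, now applied to the derivative formulas.

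Finally, the closing inequality $-\mathrm{sign}(y-x)\frac{x+\mu v/\sigma}{D(x,y)}\geq 0$ is verified case by case. In case (i), $y\leq 0\leq x$ gives $\mathrm{sign}(y-x)\leq 0$; for $y<x$ the claim reduces to $x+\mu v/\sigma\geq 0$, i.e.\ $x\geq -\mu v/\sigma$, which is exactly the upper-bound constraint on $-\mu v/\sigma = u(t-a-cv)$ required by \eqref{Eqn:Bounds:EtaConditionA} in the intended application; case (ii) is symmetric. The degenerate subcase $x=y=0$ collapses to $0\geq 0$ via the convention $\mathrm{sign}(0)=0$. I expect the boundary-agreement algebra for $\partial_v\cQ_{(2)}$ to be the main obstacle: it mirrors the check for $\cQ_{(2)}$ itself but is a little more intricate because of the extra polynomial factors introduced by the differentiation.
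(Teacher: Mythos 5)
Your proposal is correct and follows essentially the same route as the paper's (very terse) proof: differentiate the explicit branch formulas and verify that the two expressions for $\cQ_{(2)}$ and $\frac{\partial}{\partial v}\cQ_{(2)}$ agree on the switching surface $-\mathrm{sign}(y-x)\tfrac{x+\mu v/\sigma}{D(x,y)}=a+cv$. Your additional care at the degenerate point $x=y=0$ and your observation that the closing inequality really rests on the application-level constraint $x+\tfrac{\mu}{\sigma}v\geq0$ (it does not follow from the stated hypotheses i)--ii) alone) go beyond what the paper records but do not change the method.
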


\begin{proof}
It is easy to derive the expressions for the derivatives from \eqref{Eqn:H1}--\eqref{Eqn:H2_1part}.
Continuity of $\frac{\partial}{\partial v} \cQ_{(2)}(v,x,y)$ is shown by setting
$-\text{sign}(y-x)\tfrac{x +\mu v/\sigma}{D(x,y)}=a+cv$
and comparing the two expressions.
\end{proof}

\begin{lem}
\label{Lem:H1}
Let $a>0$, $c>0$ and $\sigma\leqslant \mu<0$.
\begin{enumerate}
\item
For every fixed $x\in[0,N_0]$ and $y<0$ there is an
$\mathcal{M}_{(1)}(x,y)\in \bigl[\max\{M_0,-\sigma x/\mu\},0\bigr]$
such that $\cQ_{(1)}(v,x,y)$ is negative if $v<\mathcal{M}_{(1)}(x,y)$, zero if
$v=\mathcal{M}_{(1)}(x,y)$ and positive if $v\in\left(\mathcal{M}_{(1)}(x,y),0\right]$.
\item
$\mathcal{M}_{(1)}(x,y)\in\cC\bigl([0,N_0]\times(-\infty,0],[M_0,0)\bigr)$ is a decreasing function of $x$.
\item
For every fixed $x\in[M_0,0]$ and $y>0$ there is an $\mathcal{N}_{(1)}(x,y)\in[0,-\sigma x/\mu]$
such that $\cQ_{(1)}(v,x,y)$ is negative if $v\in\left[0,\mathcal{N}_{(1)}(x,y)\right)$, zero if $v=\mathcal{N}_{(1)}(x,y)$, and positive if $v>\mathcal{N}_{(1)}(x,y)$.
\item
$\mathcal{N}_{(1)}(x,y)\in\cC\bigl([M_0,0]\times[0,\infty),(0,N_0]\bigr)$ is a decreasing function of $x$.
\end{enumerate}
\end{lem}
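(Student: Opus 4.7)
Because $\cQ_{(1)}(v,x,y)$ in \eqref{Eqn:H1} depends only on $v$ and $x$, the four claims are really about the level set $\cQ_{(1)}(\cdot,x,y)=0$ as $x$ varies. My plan is: (i)~sign-check $\cQ_{(1)}$ at the endpoints $v=0$ and $v=-\sigma x/\mu$ of the interval where the zero is claimed to live; (ii)~use the continuity furnished by Lemma~\ref{Lem:Hderivatives} and the intermediate value theorem to produce a zero; (iii)~establish uniqueness either from strict monotonicity of $\cQ_{(1)}(\cdot,x,y)$ or, when that fails, from a log-concavity argument; and (iv)~apply the implicit function theorem together with the sign of $\partial_x\cQ_{(1)}$ to deduce continuity and strict monotonicity in $x$.

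For Parts~1 and~2 ($x\in[0,N_0]$, $v\leqslant0$), the endpoint evaluations $\cQ_{(1)}(0,x,y)=(\sigma x/\mu)(1-e^{\mu a})\geqslant0$, $\cQ_{(1)}(-\sigma x/\mu,x,y)=-x(1+\sigma/\mu)e^{\mu(a-c\sigma x/\mu)}\leqslant0$ and $\cQ_{(1)}(M_0,x,y)=M_0(1+\mu/\sigma)<0$ (using $\sigma\leqslant\mu<0$ so that $1+\sigma/\mu\geqslant2$ and $1+\mu/\sigma>0$) give opposite signs at the endpoints of the relevant interval. Strict monotonicity in $v$ is direct from \eqref{Eqn:ddvH1}: for $v\leqslant0$ and $x\geqslant0$ both $\mu cv$ and $-\sigma^2cx/\mu$ are nonnegative, so the bracket $1+\mu cv-\sigma^2cx/\mu$ is at least $1$, and combined with $\mu/\sigma>0$ this yields $\partial_v\cQ_{(1)}>1$ throughout $(-\infty,0]$. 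Together with $\cQ_{(1)}(v,x,y)\to-\infty$ as $v\to-\infty$, monotonicity gives exactly one zero in $(-\infty,0]$ with the correct global sign structure. Part~2 then follows because $\partial_x\cQ_{(1)}(v,x,y)=(\sigma/\mu)(1-e^{\mu(a+cv)})>0$ for $v>M_0$, and since $\cQ_{(1)}(M_0,x,y)<0$ forces $\mathcal{M}_{(1)}>M_0$ strictly, the implicit function theorem yields continuity of $\mathcal{M}_{(1)}$ together with $\partial_x\mathcal{M}_{(1)}<0$.

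Parts~3 and~4 ($x\in[M_0,0]$, $v\geqslant0$) proceed along the same template, but uniqueness is the real subtlety: for $v\geqslant0$ and $x\leqslant0$ the bracket in \eqref{Eqn:ddvH1} is no longer bounded below by~$1$, and $\partial_v\cQ_{(1)}$ can genuinely change sign. I handle this in two moves. First, rewriting $\cQ_{(1)}(v,x,y)=(v+\sigma x/\mu)+(\mu v/\sigma-\sigma x/\mu)e^{\mu(a+cv)}$ and using that $v>-\sigma x/\mu$ implies $\mu v/\sigma>-x\geqslant0$, both summands are strictly positive for $v>-\sigma x/\mu$, which localises every zero to $[0,-\sigma x/\mu]$ and automatically handles the ``positive for $v>\mathcal{N}_{(1)}$'' part of the claim. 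Second, setting $\alpha=|\mu|/|\sigma|\in(0,1]$ and $\beta=-\sigma x/\mu\geqslant0$, the equation $\cQ_{(1)}(v,x,y)=0$ rearranges to $(\beta-v)/(\alpha v+\beta)=e^{\mu(a+cv)}$, and the logarithmic difference
\[
\Phi(v):=\ln(\beta-v)-\ln(\alpha v+\beta)+|\mu|(a+cv),\qquad v\in(0,\beta),
\]
has $\Phi''(v)=\alpha^2/(\alpha v+\beta)^2-1/(\beta-v)^2\leqslant0$ because $\alpha\leqslant1$ forces $\alpha(\beta-v)\leqslant\alpha v+\beta$. Hence $\Phi$ is concave on $(0,\beta)$ with $\Phi(0^+)=|\mu|a>0$ and $\Phi(\beta^-)=-\infty$, and any such concave function has exactly one zero. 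This log-concavity trick is the key uniqueness ingredient for Part~3 and the step I expect to be the main technical obstacle. Part~4 then follows from the implicit function theorem exactly as in Part~2.
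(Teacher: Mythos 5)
Your proposal is correct, and for Parts~1 and~2 it follows essentially the paper's own route (sign checks at $v=0$, $v=-\sigma x/\mu$ and $v=M_0$, together with $\partial_v\cQ_{(1)}>0$ on $v\leq0$, then monotonicity of $\cQ_{(1)}$ in $x$); the genuine divergence is your uniqueness argument for Part~3, which is where the real work lies. The paper locates the critical points of $v\mapsto\cQ_{(1)}(v,x,y)$ by solving $\partial_v\cQ_{(1)}=0$ explicitly via the Lambert W-function, observes that any such critical point forces $W(z)\leq-1$ so that only the injective lower branch is relevant, and concludes that the derivative changes sign at most once on $(0,-\sigma x/\mu)$, which with the endpoint signs gives a unique zero. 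Your substitution $(\beta-v)/(\alpha v+\beta)=e^{\mu(a+cv)}$ followed by the concavity of $\Phi(v)=\ln(\beta-v)-\ln(\alpha v+\beta)+|\mu|(a+cv)$ reaches the same conclusion with no special functions: a concave function that is positive at $v=0^+$ and tends to $-\infty$ at $v=\beta^-$ has exactly one zero, and the inequality $\alpha(\beta-v)\leq\alpha v+\beta$ needed for $\Phi''\leq0$ is precisely where the hypothesis $\sigma\leq\mu$ (i.e.\ $\alpha\leq1$) enters. Your version is more elementary and self-contained; the paper's Lambert~W computation has the advantage that it is reused almost verbatim in the proof of Lemma~\ref{Lem:H2} for $\cQ_{(2)}$, where your logarithmic rewriting would be considerably messier. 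Two small points to tidy: the implicit function theorem in Part~4 needs $\partial_v\cQ_{(1)}\neq0$ at the root, which you do not verify but which follows from your own argument (if $\Phi'$ vanished at the zero, concavity would force $\Phi\leq0$ everywhere, contradicting $\Phi(0^+)=|\mu|a>0$); and the paper's monotone comparison (since $\cQ_{(1)}$ increases in $x$, the root decreases) delivers Parts~2 and~4 without any differentiability bookkeeping, though your IFT route buys strict monotonicity in return.
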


\begin{proof}
If $x=0$ then $v=0$ is the only solution to $\cQ_{(1)}(v,x,y)=0$, so we consider $x\ne 0$.
Let $x\in(0,N_0]$ and $y<0$. Then $H_{(1)}(M_0,x,y)<0$, $\cQ_{(1)}\bigl(-\sigma x/\mu,x,y\bigr)<0$,
$\cQ_{(1)}(0,x,y)>0$ and $\tfrac{\partial}{\partial v}\cQ_{(1)}(v,x,y)> 0$ for all $v<0$. Thus $\cQ_{(1)}(v,x,y)$ is monotonically increasing for all $v<0$ and changes sign in the interval $\bigl[\max\{M_0,-\sigma x/\mu\},0\bigr]$.
This leads to Property 1. This property and the continuity of $\cQ_{(1)}(v,x,y)$ in $x$ and $y$ allows $\mathcal{M}_{(1)}(x,y)$
to be defined as a continuous function.

Let $\theta\in[-a-cv,0]$. As $x\in[0,N_0]$ increases, $H_{(1)}(v,x,y)(\theta)=x$ increases for $\theta\in[-a-cv,0]$ so $\cQ_{(1)}(v,x,y)$ also increases, which implies that $\mathcal{M}_{(1)}(x,y)$ decreases. This completes the proof of Property 2.

Now let $x\in[M_0,0)$ and $y>0$. Then $\cQ_{(1)}(0,x,y)=\frac{\sigma}{\mu}x(1-e^{\mu a})<0$
and $\cQ_{(1)}(-\sigma x/\mu,x,y)=-(1+\sigma/\mu)xe^{\mu(a-c\sigma x/\mu)}>0$. Thus there is a solution
to $\cQ_{(1)}(v,x,y)=0$ with $v\in(0,-\sigma x/\mu)$. To show that this solution is unique, we
differentiate $\cQ_{(1)}$. From \eqref{Eqn:ddvH1}
any solution $v=v_*$ to $\tfrac{\partial}{\partial v}\cQ_{(1)}(v,x,y)=0$ is given by
$$\left( 1 + \mu c v_* - \tfrac{\sigma^2c}{\mu}x \right)
e^{1 + \mu cv_* - \tfrac{\sigma^2c}{\mu}x} = -\tfrac{\sigma}{\mu}e^{1 -\mu a -\tfrac{c\sigma^2}{\mu}x} $$
and hence
\begin{equation} \label{Eqn:vstar1}
v_*=
\frac{1}{c\mu}(W(z)-1) + \frac{\sigma^2}{\mu^2}x, \qquad
z= -\frac{\sigma}{\mu}e^{1 -\mu a -\tfrac{c\sigma^2}{\mu}x}.
\end{equation}
where $W(z)$ is the Lambert W-function \cite{Corless1996}. Recall that the real-valued Lambert W-function
$W\in C\bigl([-e^{-1},\infty),\mathbb{R}\bigr)$ is double-valued on $(-e^{-1},0)$.
From \eqref{Eqn:ddvH1} and \eqref{Eqn:vstar1}
if $\tfrac{\partial}{\partial v}\cQ_{(1)}(v_*,x,y)=0$ then
$1+\tfrac{\mu}{\sigma}W(z)e^{\mu(a+cv_*)}=0$.
But $\mu/\sigma\in(0,1]$ and $e^{\mu(a+cv_*)}\leqslant 1$, so for such a $v_*$ we require
$W(z)\leq-1$, and in \eqref{Eqn:vstar1} we only need to consider the restricted
Lambert W-function $W\in C\bigl([-e^{-1},0],(-\infty,-1]\bigr)$, which is injective.
Thus there can be at most one point for which
$\tfrac{\partial}{\partial v}\cQ_{(1)}(v,x,y)$ changes sign
(and if $z\geqslant-e^{-1}$ such a point $v_*$ exists).
Since the derivative can change sign at most once,
there is exactly one solution $v=\mathcal{N}_{(1)}(x,y)$ to $\cQ_{(1)}(v,x,y)=0$ for $v\in(0,-\sigma x/\mu)$.
This completes the proof of Property 3. This property and the continuity
of $\cQ_{(1)}(v,x,y)$ in $x$ and $y$ allows $\mathcal{N}_{(1)}(x,y)$
to be defined as a continuous function.

Let $\theta\in[-a-cv,0]$. As $x\in[M_0,0]$ increases,
$H_{(1)}(v,x,y)(\theta)=x$ increases. So $\cQ_{(1)}(v,x,y)$ is an increasing function of $x$, and hence
$\mathcal{N}_{(1)}(x,y)$ is a decreasing function of $x$. This completes the proof of Property 4.
\end{proof}

\begin{lem} \label{Lem:H2}
Let $a>0$, $c>0$, $\sigma\leqslant \mu<0$.
\begin{enumerate}
\item
For every fixed $x\in[0,N_0]$ and $y\in[M_0,0)$ there is an
$\mathcal{M}_{(2)}(x,y)\in\bigl[\mathcal{M}_{(1)}(x,y),0\bigr]$ such that $\cQ_{(2)}(v,x,y)$ is negative if $v<\mathcal{M}_{(2)}(x,y)$, zero if $v=\mathcal{M}_{(2)}(x,y)$, and positive if
$v\in\bigl(\mathcal{M}_{(2)}(x,y),0\bigr]$.
\item
$\mathcal{M}_{(2)}(x,y)\in\cC\bigl([0,N_0]\times(-\infty,0],[M_0,0)\bigr)$ is a decreasing function of $x$.
\item
For every fixed $x\in[M_0,0]$ and $y\in(0,N_0]$, there is an $\mathcal{N}_{(2)}(x,y)\in \bigl[0,\mathcal{N}_{(1)}(x,y)\bigr]$
such that $\cQ_{(2)}(v,x,y)$ is negative if $v\in\bigl[0,\mathcal{N}_{(2)}(x,y)\bigr)$,
zero if $v=\mathcal{N}_{(2)}(x,y)$, and positive if $v>\mathcal{N}_{(2)}(x,y)$.
\item
$\mathcal{N}_{(2)}(x,y)\in\cC\bigl([M_0,0]\times[0,\infty),(0,N_0]\bigr)$ is a decreasing function of $x$.
\end{enumerate}
\end{lem}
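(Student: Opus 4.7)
The plan is to mirror the proof of Lemma~\ref{Lem:H1}, using the comparison inequalities \eqref{Eqn:H2>H1}--\eqref{Eqn:H2<H1} to sandwich the zeros of $\cQ_{(2)}$ against those of $\cQ_{(1)}$, and the derivative formulas of Lemma~\ref{Lem:Hderivatives} to establish strict monotonicity in $v$.

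For Property~1, fix $x\in(0,N_0]$ and $y\in[M_0,0)$ (the case $x=0$ is immediate), so $\text{sign}(y-x)=-1$. At the right endpoint $v=0$, direct inspection of \eqref{Eqn:H2vxy} shows that $H_{(2)}(0,x,y)(\theta)$ equals $x\geq 0$ on $[-a,-x/D(x,y)]$ and $-D(x,y)\theta\geq 0$ on $[-x/D(x,y),0]$, so $H_{(2)}(0,x,y)\geq 0$ on $[-a,0]$; hence $\cQ_{(2)}(0,x,y)=-\sigma\int_{-a}^{0}e^{-\mu\theta}H_{(2)}(0,x,y)(\theta)\,d\theta\geq 0$. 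At the left endpoint $v=\mathcal{M}_{(1)}(x,y)$, Property~1 of Lemma~\ref{Lem:H1} gives $\mathcal{M}_{(1)}(x,y)\geq -\sigma x/\mu$, so \eqref{Eqn:H2<H1} yields $\cQ_{(2)}(\mathcal{M}_{(1)}(x,y),x,y)\leq \cQ_{(1)}(\mathcal{M}_{(1)}(x,y),x,y)=0$. Continuity of $\cQ_{(2)}$ from Lemma~\ref{Lem:Hderivatives} and the intermediate value theorem then produce a zero $\mathcal{M}_{(2)}(x,y)\in[\mathcal{M}_{(1)}(x,y),0]$.

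For uniqueness, I would examine the sign of $\partial_v\cQ_{(2)}(v,x,y)$ on $[\mathcal{M}_{(1)}(x,y),0]$ using the piecewise derivative formula of Lemma~\ref{Lem:Hderivatives}. In the first case (breakpoint interior to the integration window) the derivative contains the term $e^{-\mu\,\text{sign}(y-x)(x+\mu v/\sigma)/D(x,y)}\geq 1$, which should dominate the remaining exponential contribution when $v$ lies in the relevant interval; in the second case the derivative factors as $e^{\mu(a+cv)}$ times an expression affine in $v$ whose sign is determined directly from $\sigma\leq\mu<0$. Should a direct sign check fail, the Lambert-$W$ substitution employed in the proof of Property~3 of Lemma~\ref{Lem:H1} can be adapted to show that $\partial_v\cQ_{(2)}$ changes sign at most once. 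Continuity of $\mathcal{M}_{(2)}$ then follows from joint continuity of $\cQ_{(2)}$ together with strict monotonicity in $v$, and Property~2 reduces to verifying that $H_{(2)}(v,x,y)(\theta)$ is pointwise nondecreasing in $x$, which requires splitting on whether $|x|\leq|y|$ or $|x|\geq|y|$ to handle the dependence of $D(x,y)$ on $x$.

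Properties~3 and~4 follow by the symmetric argument with $\text{sign}(y-x)=+1$: at $v=0$, $H_{(2)}(0,x,y)(\theta)\leq 0$ on $[-a,0]$ gives $\cQ_{(2)}(0,x,y)\leq 0$, while at $v=\mathcal{N}_{(1)}(x,y)\leq -\sigma x/\mu$, inequality \eqref{Eqn:H2>H1} gives $\cQ_{(2)}(\mathcal{N}_{(1)}(x,y),x,y)\geq 0$, bracketing the unique zero $\mathcal{N}_{(2)}(x,y)\in[0,\mathcal{N}_{(1)}(x,y)]$ by the same derivative analysis. The main obstacle will be the uniqueness step: the piecewise nature of $\cQ_{(2)}$, with breakpoint depending on $(v,x,y)$, requires a careful case analysis to rule out additional zeros, and the transcendental structure that forced the Lambert-$W$ argument in Lemma~\ref{Lem:H1} is only more intricate here.
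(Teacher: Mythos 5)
Your bracketing arguments for existence are sound and match the paper's: comparing $\cQ_{(2)}$ with $\cQ_{(1)}$ at $v=\mathcal{M}_{(1)}(x,y)$ (resp.\ $v=\mathcal{N}_{(1)}(x,y)$) via \eqref{Eqn:H2<H1} and \eqref{Eqn:H2>H1}, checking the sign at $v=0$, and invoking the intermediate value theorem. (The paper brackets the positive root between $0$ and $-\sigma x/\mu$, where $H_{(1)}=H_{(2)}$, and only afterwards deduces $\mathcal{N}_{(2)}\leq\mathcal{N}_{(1)}$; your direct bracket against $\mathcal{N}_{(1)}$ is an equally valid variant.) The monotonicity-in-$x$ arguments for Properties 2 and 4 are also as in the paper.

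The genuine gap is the uniqueness of the positive root in Property 3, which you correctly identify as ``the main obstacle'' but then do not resolve, and your proposed fallbacks do not suffice. First, a sign error in your heuristic: by the note at the end of Lemma~\ref{Lem:Hderivatives}, $-\mathrm{sign}(y-x)\frac{x+\mu v/\sigma}{D(x,y)}\geq 0$, so the exponent $-\mu\,\mathrm{sign}(y-x)\frac{x+\mu v/\sigma}{D(x,y)}$ is \emph{nonpositive} and the factor you claim is $\geq 1$ is in fact $\leq 1$; it cannot be used to ``dominate'' anything. (For Property~1 this does not matter, since for $v<0$ both summands in $\partial_v\cQ_{(2)}$ are already positive and the paper's one-line claim $\partial_v\cQ_{(2)}>0$ on $(M_0,0)$ goes through.) Second, and more seriously, for $v\in(0,-\sigma x/\mu)$ the Lambert-$W$ adaptation only controls the branch of $\partial_v\cQ_{(2)}$ where the breakpoint lies inside the integration window; if the condition $-\mathrm{sign}(y-x)\frac{x+\mu v/\sigma}{D(x,y)}\geq a+cv$ holds up to some $v_1\in(0,-\sigma x/\mu)$, the derivative there is $e^{\mu(a+cv)}$ times an affine function of $v$ whose sign is \emph{not} ``determined directly from $\sigma\leq\mu<0$'': it could a priori be nonpositive at $v_1$, creating a second root. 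The paper excludes this by a separate chain of implications — $v_1>0$ forces $x+D(x,y)a<0$, which via \eqref{Eqn:Dxy} gives $-a\sigma-(1+a\mu)<0$, hence $a\sigma>-1$, hence $\partial_v\cQ_{(2)}(v_1,x,y)\geq(1+\tfrac{\mu}{\sigma})(1+\sigma a)e^{\mu(a+cv_1)}>0$ — so that the derivative is positive on $(0,v_1]$ and can change sign at most once on $(v_1,-\sigma x/\mu)$ by the Lambert-$W$ argument. This implication chain is the non-obvious core of the proof and is absent from your proposal; without it the uniqueness of $\mathcal{N}_{(2)}(x,y)$, and hence the well-definedness and continuity claimed in Properties 3 and 4, is not established.
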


\begin{proof}
By the results for $k=1$ and the inequalities \eqref{Eqn:H2>H1} and \eqref{Eqn:H2<H1}, the case $x=0$ will always yield a solution $v=0$. So we consider $x\ne 0$.

Let $x\in(0,N_0]$ and $y\in[M_0,0)$. By Lemma~\ref{Lem:H1}(1) we have
$\mathcal{M}_{(1)}(x,y)\geq-\sigma x/\mu$ and hence by
\eqref{Eqn:H2<H1}, $\cQ_{(2)}\bigl(\mathcal{M}_{(1)}(x,y),x,y\bigr)\leqslant
\cQ_{(1)}\bigl(\mathcal{M}_{(1)}(x,y),x,y\bigr)=0$.
By \eqref{Eqn:H2_2part}--\eqref{Eqn:H2_1part}, $\cQ_{(2)}(0,x,y) > 0$.
Thus there must exist a solution $v\in[\mathcal{M}_{(1)}(x,y),0)$ to $\cQ_{(2)}(v,x,y)=0$.
Using the expressions from Lemma~\ref{Lem:Hderivatives} it follows that $\tfrac{\partial}{\partial v}\cQ_{(2)}(v,x,y)>0$ for
$v\in(M_0,0)$.
Thus the solution is unique and Property 1 follows easily. This property and the continuity of $\cQ_{(2)}(v,x,y)$
in $x$ and $y$ allows $\mathcal{M}_{(2)}(x,y)$ to be defined as a continuous function.

For fixed $v$ and $y\in(M_0,0)$, if $x\in(0,N_0]$ increases, $H_{(2)}(v,x,y)(\theta)$ increases for all $\theta\in[-a-cv,0)$ so $\cQ_{(2)}(v,x,y)$ also increases. By Property 1, this implies
$\mathcal{M}_{(2)}(x,y)$ decreases with increasing $x$. This completes the proof of Property 2.

Now let $x\in[M_0,0)$ and $y>0$. Then $\cQ_{(2)}(0,x,y)<0$.
We note that when $v=-\sigma x/\mu$ we have $0=-\text{sign}(y-x)\tfrac{x + (\mu/\sigma) v}{D(x,y)}< a+cv$.
At this point
$H_{(1)}(v,x,y)(\theta)=H_{(2)}(v,x,y)(\theta)=x$ for all $\theta\in[-a-cv,0]$
and
$\cQ_{(2)}(-\sigma x/\mu,x,y) = \cQ_{(1)}(-\sigma x/\mu,x,y)>0$ (from Lemma~\ref{Lem:H1}).
Thus there must exist
$v\in(0,-\sigma x/\mu]\subseteq(0,N_0]$ which solves
$\cQ_{(2)}(v,x,y)=0$.

For the behaviour of $\tfrac{\partial}{{\partial v}}\cQ_{(2)}(v,x,y)$
over $v\in(0,-\sigma x/\mu)$, there are two cases to consider:
\begin{enumerate}
\item $-\text{sign}(y-x)\tfrac{x +\mu v/\sigma}{D(x,y)}< a+cv$ for all $v\in(0,-\sigma x/\mu)$. Then, similar
to the proof of Lemma~\ref{Lem:H1},
using the restricted Lambert W-function, we find that
$\tfrac{\partial}{\partial v}\cQ_{(2)} (v,x,y)$ can change sign at most once at $v_*$, where
\[
v_*=\tfrac{1}{c\mu}(W(z)-1) + \tfrac{\sigma^2}{\mu^2}x, \qquad
z= - \tfrac{\sigma}{\mu}e^{1 -\mu a +\mu\text{sign}(x)\tfrac{x+\mu v/\sigma}{D(x,y)}-\tfrac{c\sigma^2}{\mu}x}.
\]
Since $\cQ_{(2)}(0,x,y)<0<\cQ_{(2)}(\sigma x/\mu,x,y)$, whether or not
$v_*\in(0,-\sigma x/\mu)$ there can still only be one solution $v=\mathcal{N}_{(2)}(x,y)$ to $\cQ_{(2)}(v,x,y)=0$.
\item
$-\text{sign}(y-x)\tfrac{x + \mu v_1/\sigma}{D(x,y)}=a+cv_1$ for some $v_1\in(0,-\sigma x/\mu)$.
In this case, for $v\in (0,v_1)$, the sign of
$\tfrac{\partial}{{\partial v}}\cQ_{(2)}(v,x,y)$ depends on the factor,
\begin{multline*}
( 1 + \mu cv)
\left( 1 + \tfrac{\mu}{\sigma} \right) + \text{sign}(y-x)\sigma c D(x,y)(a + cv) \\
=\left( 1 + \tfrac{\mu}{\sigma}\right)
+ \text{sign}(y-x)\sigma c D(x,y)a
+\left[\mu c\left( 1 + \tfrac{\mu }{\sigma } \right) + \text{sign}(y-x)\sigma c^2 D(x,y)a\right]v.
\end{multline*}
Since $\mu c( 1 + \mu/\sigma)<0$ and $\text{sign}(y-x)\sigma c^2 D(x,y)a<0$ then
either $\tfrac{\partial}{{\partial v}}\cQ_{(2)}(v,x,y)>0$ for all $v\in(0,v_1]$ or
$\tfrac{\partial}{{\partial v}}\cQ_{(2)}(v_1,x,y)\leq0$.
We will show that this second case is not possible.
At $v=v_1$, 
$$
\tfrac{\partial}{{\partial v}}\cQ_{(2)}(v_1,x,y)
 = \bigl[\tfrac{\mu}{\sigma}(1+\mu c v_1 -\tfrac{\sigma^2 c x}{\mu} )+1 \bigr] e^{\mu(a+cv_1)}
 \geq \bigl(1+\tfrac{\mu}{\sigma}\bigr)(1+\sigma a)e^{\mu(a+cv_1)},
$$
where the last inequality follows since $v_1\leq N_0$  and $x\geq M_0$.
Thus it is sufficient to show that
$a\sigma>-1$
to prove that $\tfrac{\partial}{{\partial v}}\cQ_{(2)}(v_1,x,y)>0$.

We have $0<v_1=-\tfrac{x+D(x,y) a}{(\mu/\sigma) +c D(x,y)}$ and hence $x+D(x,y)a<0$.
But from \eqref{Eqn:Dxy},
$$x+D(x,y)a\geq x+(|\mu|+|\sigma|)|x|a=|x|(-a\sigma-(1+a\mu)).$$
Thus $-a\sigma-(1+a\mu)<0$ which implies that $a\sigma>-(1+a\mu)>-1$,
and hence $\tfrac{\partial}{{\partial v}}\cQ_{(2)}(v,x,y)>0$ for all $v\in(0,v_1]$.
From the previous case,
$\tfrac{\partial}{{\partial v}}\cQ_{(2)}(v,x,y)$ can only change sign once for $v\in(v_1,-\sigma x/\mu)$
and hence only once for $v\in(0,-\sigma x/\mu)$ and again there is only one solution $v$ to $\cQ_{(2)}(v,x,y)=0$ on $v\in(0,-\sigma x/\mu)$.
\end{enumerate}
In either case, there is exactly one solution $v\in(0,-\sigma x/\mu)$ to $\cQ_{(2)}(v,x,y)=0$ and we define
$\mathcal{N}_{(2)}(x,y)=v$ where $v$ solves $\cQ_{(2)}(v,x,y)=0$.
By Lemma~\ref{Lem:H1}(3) we have $\mathcal{N}_{(1)}(x,y)\leq-\sigma x/\mu$ and hence by
\eqref{Eqn:H2>H1},
$\cQ_{(2)}\bigl(\mathcal{N}_{(1)}(x,y),x,y\bigr)\geq \cQ_{(1)}\bigl(\mathcal{N}_{(1)}(x,y),x,y\bigr)=0$.
It follows that $\mathcal{N}_{(2)}(x,y)\in\bigl(0,\mathcal{N}_{(1)}(x,y)\bigr)$.

Let $\theta\in[-a-cv,0]$. As $x\in[M_0,0]$ increases, $H_{(2)}(v,x,y)(\theta)$ increases.
So $\cQ_{(2)}(v,x,y)$ is an increasing function of $x$.
This means $\mathcal{N}_{(2)}(x,y)$ is a decreasing function of $x$. This completes the proof of Property 4.
\end{proof}

We are now in the position to prove Theorem~\ref{Thm:1Delay_changingbounds} which for $k=1$ or $2$, gives a sequence of lower bounds $M_{k,n}$ and upper bounds $N_{k,n}$ to the solutions of \eqref{Eqn:1Delay}.


\begin{thm} \label{Thm:1Delay_changingbounds}
Let $k=1$ or $2$, $a>0$, $c>0$ and $\sigma\leqslant \mu<0$.
Let $u(t)$ be a 
solution to \eqref{Eqn:1Delay} where
$\varphi(t)\geq M_0$ and 
continuous for $t\in[-\tau,0]$ with $N=\max\{\phi(0),N_0\}$.
Define the sequence of bounds $\{M_{k,n}\}_{n\geq0}$ and $\{N_{k,n}\}_{n\geq0}$ recursively
with $M_{k,0}=M_0$ and $N_{k,0}=N_0$ and using
\begin{equation*}
M_{k,n+1}=\mathcal{M}_{(k)}(N_{k,n},M_{k,n}), \quad N_{k,n+1}=\mathcal{N}_{(k)}(M_{k,n},N_{k,n}).
\end{equation*}
Then there exists a sequence $\{T_{k,n}\}\to\infty$ such that $u(t)\in[M_{k,n},N_{k,n}]$
for all $t\geqslant T_{k,n}$.
\end{thm}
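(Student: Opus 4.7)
The plan is to prove the theorem by induction on $n$. The base case $n=0$ follows immediately from Theorem~\ref{Thm:1DelayProperties}: there is a $T_{k,0}$ such that $u(t)\in[M_0,N_0]=[M_{k,0},N_{k,0}]$ for all $t\geq T_{k,0}$. For the inductive step, assume $u(t)\in[M_{k,n},N_{k,n}]$ for all $t\geq T_{k,n}$, and invoke Lemma~\ref{Lem:1Delay_solutionbehaviour}. In case (A), $u(t)\to 0$ monotonically; since Lemmas~\ref{Lem:H1}--\ref{Lem:H2} guarantee $M_{k,n+1}<0<N_{k,n+1}$, we simply pick $T_{k,n+1}$ large enough that $u(t)\in[M_{k,n+1},N_{k,n+1}]$ thereafter. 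In case (B), the solution has infinitely many local extrema, and we take $T_{k,n+1}$ to be the time of the first such extremum occurring after $T_{k,n}+(k+1)\tau$.

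At any local extremum $t^*\geq T_{k,n}+(k+1)\tau$ with $u(t^*)=v$, the condition $\dot u(t^*)=0$ forces $u(t^*-a-cv)=-\mu v/\sigma$ via \eqref{Eqn:1Delay}, and integrating from $t_0=t^*-a-cv$ to $t^*$ yields the identity \eqref{Eqn:BoundsIntegration1}. The $(k+1)\tau$ buffer guarantees that $\theta+t^*-a-cu(\theta+t^*)\geq t^*-(k+1)\tau\geq T_{k,n}$ for every $\theta\in[-a-cv,0]$, so the inductive hypothesis propagates to give $u(\theta+t^*-a-cu(\theta+t^*))\in[M_{k,n},N_{k,n}]$, and for $k=2$ it likewise delivers the derivative bound \eqref{Eqn:Bounds:EtaConditionB}. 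Replacing the integrand of \eqref{Eqn:BoundsIntegration1} by the extremal profile $H_{(k)}(v,M_{k,n},N_{k,n})$ at a positive maximum (respectively $H_{(k)}(v,N_{k,n},M_{k,n})$ at a negative minimum), as in the derivation of \eqref{Eqn:relmin}--\eqref{Eqn:relmax}, gives $\cQ_{(k)}(v,M_{k,n},N_{k,n})\leq 0$ or $\cQ_{(k)}(v,N_{k,n},M_{k,n})\geq 0$ respectively. Applying Lemmas~\ref{Lem:H1}--\ref{Lem:H2} then yields $v\leq \mathcal{N}_{(k)}(M_{k,n},N_{k,n})=N_{k,n+1}$ or $v\geq \mathcal{M}_{(k)}(N_{k,n},M_{k,n})=M_{k,n+1}$. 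Nonpositive local maxima and nonnegative local minima satisfy the new bounds trivially since $0\in[M_{k,n+1},N_{k,n+1}]$.

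Between consecutive local extrema, $u$ is monotone and hence bounded by the two extremal values; combining this with the extremum bounds just established gives $u(t)\in[M_{k,n+1},N_{k,n+1}]$ for all $t\geq T_{k,n+1}$, which closes the induction. The main obstacle is not in any single step but in the careful bookkeeping: one has to propagate the $(k+1)\tau$ history buffer so that both the value and (for $k=2$) the derivative bounds hold throughout the pullback window $[t^*-(k+1)\tau,t^*]$, and must keep the order of the two extremal arguments of $H_{(k)}$ correct so that the sign of $\sigma$ converts the pointwise integrand bound into the inequality of the right direction on $\cQ_{(k)}$. Once these pieces are in place, the sequence $\{T_{k,n}\}$ can be chosen to be strictly increasing and unbounded.
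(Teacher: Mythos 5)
Your proposal is correct and follows essentially the same route as the paper's proof: induction on $n$, splitting via Lemma~\ref{Lem:1Delay_solutionbehaviour} into the eventually-monotone and oscillatory cases, and converting the identity \eqref{Eqn:BoundsIntegration1} at each local extremum after the $(k+1)\tau$ buffer into the inequalities $\cQ_{(k)}(v,M_{k,n},N_{k,n})\leq 0$ and $\cQ_{(k)}(v,N_{k,n},M_{k,n})\geq 0$, which Lemmas~\ref{Lem:H1}--\ref{Lem:H2} turn into the new bounds. The only imprecision is the base case: Theorem~\ref{Thm:1DelayProperties} gives $u(t)\in(M_0,N)$ with $N=\max\{\phi(0),N_0\}$ possibly exceeding $N_0$, so reaching $[M_0,N_0]$ is not quite immediate and requires the short extra argument the paper supplies (if $u(t)>N_0$ persisted past time $\tau$ then $\dot u(t)\leq(\mu+\sigma)N_0<0$, forcing the solution below $N_0$ in finite time).
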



\begin{proof}
By Theorem~\ref{Thm:1DelayProperties} every solution satisfies $u(t)\in(M_0,N)$ for
all $t>0$. If $u(t)\geq N_0$ then $\dot{u}(t)\leq0$ from \eqref{Eqn:1Delay}. Hence if there exists
$T_0\geq0$ such that $u(T_0)\leq N_0$ then $u(t)\in(M_0,N_0]$ for all $t\geq T_0$. But, if $u(t)>N_0$
for $t\geq \tau$ then $u(t-a-cu(t))>N_0$  using \eqref{Eqn:alphabounds}, and
hence $\dot{u}(t)\leq(\mu+\sigma)N_0<0$, and so there must exist $T_0>0$ such that
$u(t)\in(M_0,N_0]$ for all $t\geq T_0$.

By Lemmas~\ref{Lem:H1} and~\ref{Lem:H2}, the sequences $\{M_{k,n}\}$ and $\{N_{k,n}\}$ defined above satisfy $M_{k,n} \leq M_{k,n+1}\leq 0 \leq N_{k,n+1}\leq N_{k,n}$ for $n\in\mathbb{Z}^+$,
and hence define a sequence of intervals with $[M_{k,n+1},N_{k,n+1}] \subseteq [M_{k,n},N_{k,n}]$ for $n\in\mathbb{Z}^+$.

By Lemma~\ref{Lem:1Delay_solutionbehaviour}, either the solution $u(t)\to0$ eventually monotonically,
or for all $t>0$ there exists $T^+, T^->t$ such that the solution attains a
positive local maximum at $T^+$ and a negative local minimum at $T^-$.
We need only consider this last case, and establish the result by induction.

Suppose that $u(t)\in[M_{k,n},N_{k,n}]$ for all $t\geq T_{k,n}$ for some $n\in\mathbb{Z}^+$ (and
note that we already established this for $n=0$ above).
Let $\tau_{k,n}=a+c N_{k,n}$, then
any positive local maximum at $T^+_{k,n+1}\geq T_{k,n}+(k+1)\tau_{k,n}$ must have $u(T^+_{k,n+1})=v^+$ satisfying $\cQ_{(k)}(v^+,M_{k,n},N_{k,n})\leq0$, and hence $v^+\leq \cN_{(k)}(M_{k,n},N_{k,n})=N_{k,n+1}$.
Any negative local minimum at $T^-_{k,n+1} \geq T_{k,n}+(k+1)\tau_{k,n}$ must have $u(T^-_{k,n+1})=v^-$ satisfying
$\cQ_{(k)}(v^-,N_{k,n},M_{k,n})\geq0$, and hence $v^-\geq \cM_{(k)}(N_{k,n},M_{k,n})=M_{k,n+1}$.
Let $T^+_{k,n+1}$ and $T^-_{k,n+1}$ denote the first local maximum and minimum for $t \geq T_n+k\tau_{k,n}$
and let $T_{n+1}=\max\{T^-_{n+1},T^+_{n+1}\}$ then $u(t)\in[M_{k,n+1},N_{k,n+1}]$ for all $t\geq T_{k,n+1}$,
which completes the induction and the proof.
\end{proof}

In the limit as $n \to \infty$, the interval $\left[M_{k,\infty},N_{k,\infty}\right]$ which bounds the solution as $t \to \infty$ can be calculated using the equations
\begin{equation} \label{Eqn:Bifurcation_1Delayinfinity}
\cQ_{(k)}\bigl(N_{k,\infty},M_{k,\infty},N_{k,\infty}\bigr)=\cQ_{(k)}\bigl(M_{k,\infty},N_{k,\infty},M_{k,\infty}\bigr)=0.
\end{equation}
If $M_{k,\infty}=N_{k,\infty}=0$ the steady state is global asymptotic stability.

\begin{thm}[Global asymptotic stability in $\wedg$] \label{Thm:GAS}
Let $k=1$ or $2$, $a>0$, $c>0$ and $\sigma\leqslant \mu<0$.
Let $u(t)$ be the solution to \eqref{Eqn:1Delay} where
$\varphi(t)\geq M_0$ and 
continuous for $t\in[-\tau,0]$ with $N=\max\{\phi(0),N_0\}$.
Suppose that the only solution $(M,N)\in[M_0,0]\times[0,N_0]$
to $\cQ_{(k)}(N,M,N)=\cQ_{(k)}(M,N,M)=0$ is $M=N=0$
then any solution $u(t)$ to \eqref{Eqn:1Delay} satisfies $u(t)\to 0$.
\end{thm}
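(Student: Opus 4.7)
The plan is to apply Theorem~\ref{Thm:1Delay_changingbounds} to generate the nested sequence of bounding intervals $[M_{k,n},N_{k,n}]$, pass to the limit using the continuity of $\mathcal{M}_{(k)}$ and $\mathcal{N}_{(k)}$ from Lemmas~\ref{Lem:H1}--\ref{Lem:H2}, and then invoke the hypothesis of the theorem to pin the limiting interval down to $\{0\}$.

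First I would invoke Theorem~\ref{Thm:1Delay_changingbounds} to produce the sequences $\{M_{k,n}\}$ and $\{N_{k,n}\}$ and a sequence of times $\{T_{k,n}\}\to\infty$ with $u(t)\in[M_{k,n},N_{k,n}]$ for all $t\geqslant T_{k,n}$. By that theorem the intervals are nested, $[M_{k,n+1},N_{k,n+1}]\subseteq[M_{k,n},N_{k,n}]$, with $M_{k,n}\leqslant 0\leqslant N_{k,n}$. Hence $\{M_{k,n}\}$ is non-decreasing and bounded above by $0$, and $\{N_{k,n}\}$ is non-increasing and bounded below by $0$, so both sequences converge to limits $M_{k,\infty}\in[M_0,0]$ and $N_{k,\infty}\in[0,N_0]$.

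Next, I would pass to the limit in the recursion $M_{k,n+1}=\mathcal{M}_{(k)}(N_{k,n},M_{k,n})$ and $N_{k,n+1}=\mathcal{N}_{(k)}(M_{k,n},N_{k,n})$. The limits $(M_{k,\infty},N_{k,\infty})$ lie in the domains where Lemmas~\ref{Lem:H1}--\ref{Lem:H2} assert $\mathcal{M}_{(k)}$ and $\mathcal{N}_{(k)}$ are continuous, so the limiting pair satisfies $M_{k,\infty}=\mathcal{M}_{(k)}(N_{k,\infty},M_{k,\infty})$ and $N_{k,\infty}=\mathcal{N}_{(k)}(M_{k,\infty},N_{k,\infty})$. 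By the defining characterisations of $\mathcal{M}_{(k)}$ and $\mathcal{N}_{(k)}$ in those lemmas, this is equivalent to the system
\begin{equation*}
\cQ_{(k)}\bigl(M_{k,\infty},N_{k,\infty},M_{k,\infty}\bigr)=0,\qquad
\cQ_{(k)}\bigl(N_{k,\infty},M_{k,\infty},N_{k,\infty}\bigr)=0,
\end{equation*}
which is precisely \eqref{Eqn:Bifurcation_1Delayinfinity}. The hypothesis of the theorem then forces $M_{k,\infty}=N_{k,\infty}=0$, and since $u(t)\in[M_{k,n},N_{k,n}]$ for $t\geqslant T_{k,n}$ with $T_{k,n}\to\infty$ and the intervals collapse to $\{0\}$, we conclude $u(t)\to 0$. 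Case (A) of Lemma~\ref{Lem:1Delay_solutionbehaviour}, in which $u(t)$ is eventually monotone, gives $u(t)\to 0$ immediately and needs no separate treatment.

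I do not expect a real obstacle here, since the heavy lifting has already been done in Theorem~\ref{Thm:1Delay_changingbounds} and the preparatory lemmas; this theorem is essentially a packaging result. The only small care required is to verify that the limits $(M_{k,\infty},N_{k,\infty})$ remain inside the domains $[0,N_0]\times[M_0,0]$ and $[M_0,0]\times[0,N_0]$ on which Lemmas~\ref{Lem:H1} and~\ref{Lem:H2} guarantee the continuity of $\mathcal{M}_{(k)}$ and $\mathcal{N}_{(k)}$, which is immediate from the monotonicity of the two sequences and the starting values $M_{k,0}=M_0$, $N_{k,0}=N_0$.
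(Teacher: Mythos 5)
Your proposal is correct and follows essentially the same route as the paper's proof: take the nested bounds from Theorem~\ref{Thm:1Delay_changingbounds}, use monotone convergence and the continuity of $\mathcal{M}_{(k)}$ and $\mathcal{N}_{(k)}$ to pass to the limit in the recursion so that $(M_{k,\infty},N_{k,\infty})$ satisfies \eqref{Eqn:Bifurcation_1Delayinfinity}, and invoke the uniqueness hypothesis to force $M_{k,\infty}=N_{k,\infty}=0$. You have merely spelled out steps the paper leaves implicit (the convergence of the monotone sequences and the final deduction that $u(t)\to 0$ from the collapsing intervals and $T_{k,n}\to\infty$).
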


\begin{proof}
From the proof of Theorem~\ref{Thm:1Delay_changingbounds} $M_0\leq M_{k,\infty}\leq0\leq N_{k,\infty}\leq N_0$.
Then by continuity of $\mathcal{M}_{(k)}$ and $\mathcal{N}_{(k)}$ we have
$$M_{k,\infty}=\mathcal{M}_{(k)}(N_{k,\infty},M_{k,\infty}), \quad N_{k,\infty}=\mathcal{N}_{(k)}(M_{k,\infty},N_{k,\infty}),$$
and hence \eqref{Eqn:Bifurcation_1Delayinfinity} holds. But by supposition
$M_{k,\infty}=N_{k,\infty}=0$ is the only solution of \eqref{Eqn:Bifurcation_1Delayinfinity}.
\end{proof}

\begin{figure}[t]
\begin{center}
\subfigure[]{\includegraphics[scale=0.5]{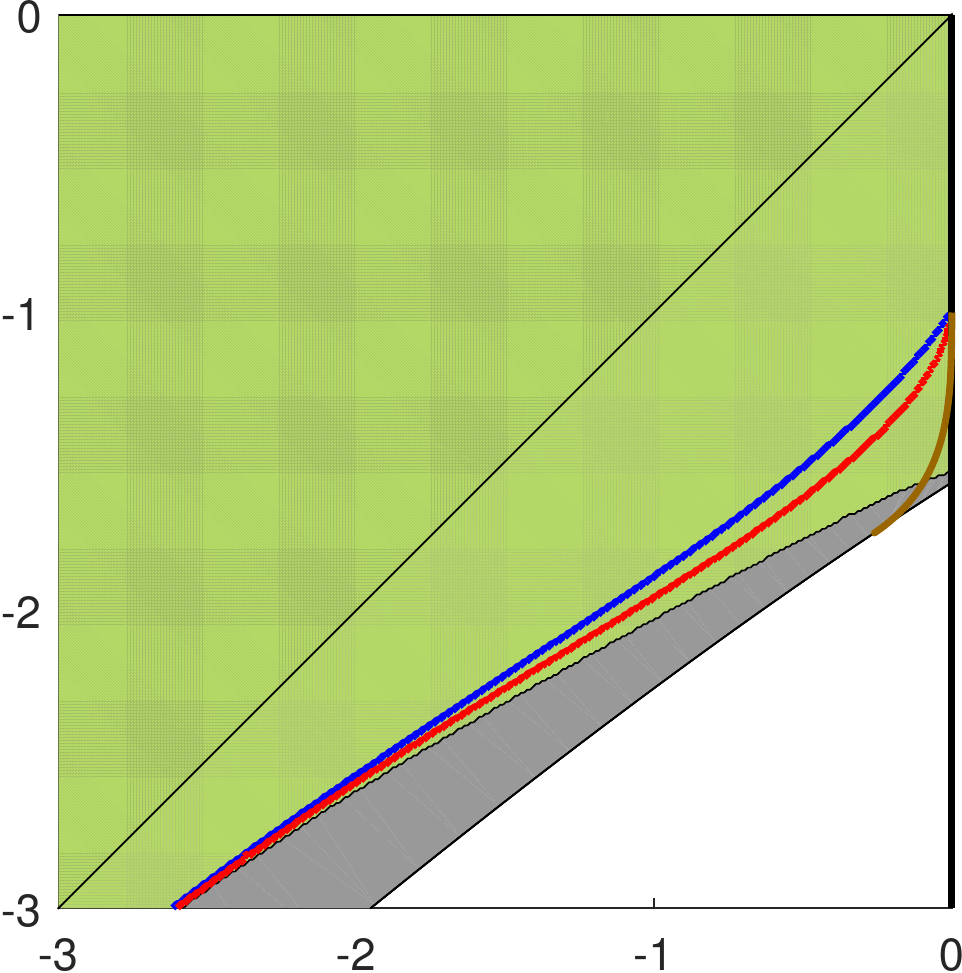}}
\put(-25,0){{\footnotesize $\mu$}}
\put(-145,115){{\footnotesize $\sigma$}}
\hspace{2em}
\subfigure[]{\includegraphics[scale=1]{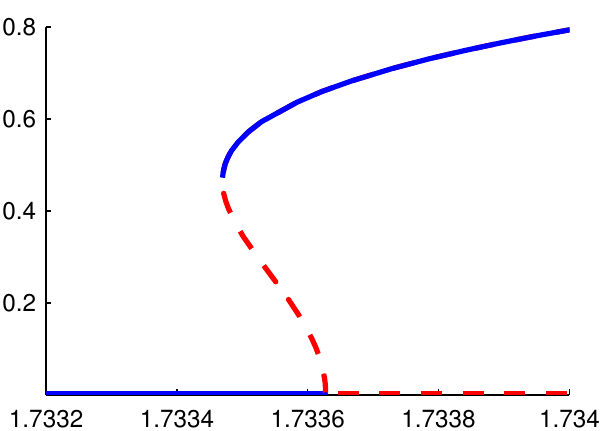}}
\put(-30,-1){{\footnotesize $\sigma$}}
\caption{\label{Fig:GAS}(a) The green region is the parameter region where the zero solution was proven to be locally asymptotically stable using Lyapunov-Razumikhin techniques in  \cite{MH:2016} (denoted as the set $\{P(1,0,3)<1\}$).
The blue curve shows the boundary of the region where we numerically evaluate $M_{1,\infty}=N_{1,\infty}=0$ and the red curve shows the boundary of the set where $M_{2,\infty}=N_{2,\infty}=0$. The steady state of \eqref{Eqn:1Delay} is globally asymptotically stable to the left of the red curve.
The brown curve shows the locus of a fold bifurcation of periodic orbits computed using DDE-Biftool.
To the right of the brown curve there are co-existing non-trivial periodic orbits and the steady-state
cannot be globally asymptotically stable. Other parameters are fixed at $a=c=1$.
(b) Bifurcation diagram at $\mu=-0.25$ as $\sigma$ is varied showing a subcritical Hopf bifurcation from
the steady state and the amplitude of the resulting periodic orbits.
There is a small interval of $\sigma$ values for which bistability occurs, and the steady-state is only locally
asymptotically stable.}
\end{center}
\end{figure}

Theorem~\ref{Thm:GAS} provides a condition for global asymptotic stability that is easy to verify for any fixed set of parameter values. In Figure~\ref{Fig:GAS}(a) we show
the boundary of the regions where $M_{k,\infty}=N_{k,\infty}=0$ for $k=1$ and $k=2$
in the $(\mu,\sigma)$ plane.
By Theorem~\ref{Thm:GAS}, the zero solution to \eqref{Eqn:1Delay} is globally asymptotically stable when $M_{2,\infty}=N_{2,\infty}=0$, to the left of the red curve in Figure~\ref{Fig:GAS}(a).
We note that the steady-state is not globally asymptotically in all of the wedge $\wedg$ because
for $\mu$ sufficiently close to $0$ the Hopf bifurcation which occurs on the boundary of $\wedg$ is
subcritical leading to coexistence of the stable steady state and non-trivial periodic orbits. This
is illustrated in Figure~\ref{Fig:GAS}(b).
The green region is the set 
for which the Lyapunov-Razumikhin results of \cite{MH:2016} gave the lower bounds on the basin of attraction as shown in
Figure~\ref{fig:delta12}.
The brown line in Figure~\ref{Fig:GAS}(a) shows the (numerically computed)
two-parameter locus of the fold bifurcation of periodic orbits seen in Figure~\ref{Fig:GAS}(b).
To the right of this line in Figure~\ref{Fig:GAS}(a) there is coexistence of the steady-state
and non-trivial periodic orbits, and so the steady-state cannot be globally asymptotically stable.
Comparing the red curve, with the brown curve and the green region 
we see that using Theorem~\ref{Thm:GAS} with $k=2$ we are able to establish
global asymptotic stability of the steady state in most of the parameter set in which we showed
that it is locally
asymptotically stable using Lyapunov-Razumikhin techniques in \cite{MH:2016}. The small region of $(\mu,\sigma)$
parameter space where we established local asymptotic stable in \cite{MH:2016}, but are unable to establish global asymptotic stability using the techniques of this paper
includes a region where the steady-state is only locally asymptotically stable due to the subcritical Hopf bifurcation.

\begin{figure}[t]
\begin{center}
\subfigure[$\dot u(t)=-0.25u(t)-1.75u(t-1-u(t))$]{\includegraphics[scale=1]{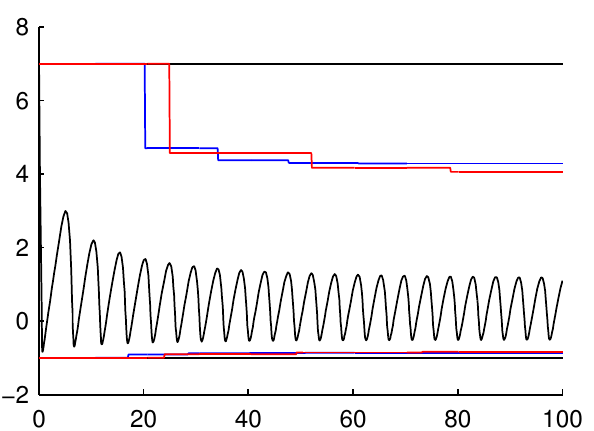}}
\put(-27,0){{\footnotesize $t$}}
\put(-125,50){{\footnotesize $u(t)$}}
\put(-40,109){{\footnotesize $N_0$}}
\put(-40,15){{\footnotesize $M_0$}}
\put(-40,80){{\footnotesize\color{blue}$N_{1,n}$}}
\put(-40,68){{\footnotesize\color{red}$N_{2,n}$}}
\subfigure[$\dot u(t)=-0.25 u(t)+\sigma u(t-1-u(t))$]{\includegraphics[scale=1]{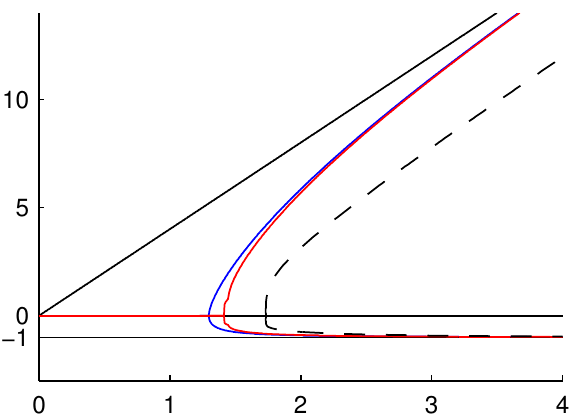}}
\put(-27,0){{\footnotesize $-\sigma$}}
\put(-136,51){{\footnotesize $N_0$}}
\put(-135,17){{\footnotesize $M_0$}}
\put(-40,74){{\footnotesize$\underset{t}{\max}\;u(t)$}}
\put(-120,40){{\footnotesize\color{blue}$N_{1,\infty}$}}
\put(-77,65){{\footnotesize\color{red}$N_{2,\infty}$}}

\subfigure[$\dot u(t)=-2u(t)-2.8u(t-1-u(t))$]{\includegraphics[scale=1]{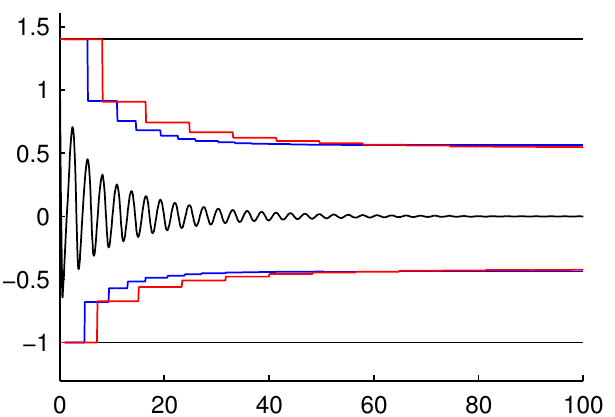}}
\put(-27,0){{\footnotesize $t$}}
\put(-60,60){{\footnotesize $u(t)$}}
\put(-40,111){{\footnotesize $N_0$}}
\put(-40,15){{\footnotesize $M_0$}}
\put(-140,76){{\footnotesize\color{blue}$N_{1,n}$}}
\put(-120,85){{\footnotesize\color{red}$N_{2,n}$}}
\put(-120,44){{\footnotesize\color{blue}$M_{1,n}$}}
\put(-135,30){{\footnotesize\color{red}$M_{2,n}$}}
\subfigure[$\dot u(t)=-2u(t)+\sigma u(t-1-u(t))$]{\includegraphics[scale=1]{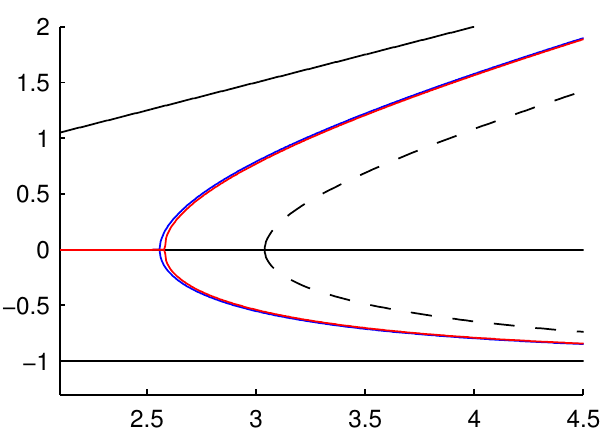}}
\put(-30,0){{\footnotesize $-\sigma$}}
\put(-125,100){{\footnotesize $N_0$}}
\put(-125,14){{\footnotesize $M_0$}}
\put(-40,80){{\footnotesize$\underset{t}{\max}\;u(t)$}}
\put(-40,37){{\footnotesize$\underset{t}{\min}\;u(t)$}}
\put(-125,75){{\footnotesize\color{blue}$N_{1,\infty}$}}
\put(-113,65){{\footnotesize\color{red}$N_{2,\infty}$}}
\put(-128,35){{\footnotesize\color{blue}$M_{1,\infty}$}}
\put(-117,40){{\footnotesize\color{red}$M_{2,\infty}$}}
\caption{\label{Fig:Bounds}(a) \& (c): Numerically simulated solution $u(t)$ to \eqref{Eqn:1Delay}
with $\phi(t)=0.99N_0$ for all $t\leq0$, along with the bounds $M_{k,n}$ and $N_{k,n}$ from Theorem~\ref{Thm:1Delay_changingbounds} with $k=1$ and $k=2$. In (a), $(\mu,\sigma)=(-0.25,-1.75)\not\in\wedg$
and the steady state solution is unstable. The solution $u(t)$ converges to a periodic orbit.
In (c), $(\mu,\sigma)=(-2,-2.8)\in\wedg$ and $u(t)$ converges to the steady state.
(b) \& (d): For the same values of $\mu$ as in (a) and (c) we vary $\sigma$ and plot the bounds
$[M_{k,\infty},N_{k,\infty}]$ on the persistent dynamics from Theorem~\ref{Thm:GAS} for $k=1$ and $k=2$
along with $[\min_t u(t),\max_t u(t)]$ for the periodic orbit created at the Hopf bifurcation
when $\sigma$ crosses the boundary of $\wedg$.}
\end{center}
\end{figure}

Theorem~\ref{Thm:1Delay_changingbounds} can be applied to obtain useful bounds
on solutions when Theorem~\ref{Thm:GAS} does not apply, including when the steady-state
is unstable.
In Figure~\ref{Fig:Bounds}(a) we present an example solution to \eqref{Eqn:1Delay} with $\mu$ and $\sigma$
both negative but outside $\Sigma_\star$ so the steady state is unstable. In that case
$u(t)$ approaches a stable periodic orbit. The bounds $M_{k,n}$ and $N_{k,n}$ for $k=1$ and $2$ are shown
and provide upper and lower bounds on the periodic solution (and any other persistent dynamics) which are
much better than the initial bounds $[M_0,N_0]$.

In Figure~\ref{Fig:Bounds}(c) we show an example solution to \eqref{Eqn:1Delay} with $(\mu,\sigma)\in\wedg$
so that the steady state is asymptotically stable, but we choose $(\mu,\sigma)$ outside the part of
$\wedg$ in which we can show either local or global asymptotic stability using Lyapunov-Razumikhin techniques.
In this case $u(t)$ approaches the stable steady state, and even
though the bounds $M_{k,n}$ and $N_{k,n}$ do converge to the steady state as $n\to\infty$ they do constrain
any possible persistent dynamics to a small part of the interval $[M_0,N_0]$.

It follows from Lemmas~\ref{Lem:H1} and~\ref{Lem:H2} that $M_{1,n}\leq M_{2,n}\leq0$ and $N_{1,n}\geq N_{2,n}\geq0$
so that $k=2$ gives tighter bounds than $k=1$ for fixed $n$ and as $n\to\infty$, and hence as $t\to\infty$.
However, because we require $T_{k,n+1}\geq T_{k,n}+(k+1)\tau_{k,n}$, at least for small $n$ we typically have
$T_{1,n}< T_{2,n}$ so the $k=1$ bounds apply for smaller values of $t$, as seen in Figure~\ref{Fig:Bounds}(c).

In Figures~\ref{Fig:Bounds}(b) and (d) we show how the bounds $M_{k,\infty}$ and $N_{k,\infty}$ vary
as $\sigma$ is varied for fixed $\mu$. We obtain $M_{k,\infty}=N_{k,\infty}=0$ for $(\mu,\sigma)$ values
which are above the blue curve in Figure~\ref{Fig:GAS}(a) for $k=1$ and above the red curve for $k=2$.
For other values of $(\mu,\sigma)\in\wedg$ the interval $[M_{k,\infty},N_{k,\infty}]$ is non-trivial even
though the steady-state is stable. At the boundary of $\wedg$ a Hopf bifurcation occurs, and we show
$\min_t\{u(t)\}$ and $\max_t\{u(t)\}$ for the resulting periodic solution. Our bounds display similar to
the actual solutions but are shifted slightly to the right on these graphs.
The resulting gap between the derived bounds $[M_{k,\infty},N_{k,\infty}]$ and the actual behaviour
of the solutions is caused by the use of the $H_{(k)}(v,x,y)$ functions in our bounds.  


\begin{figure}[ht]
\begin{center}
\subfigure[Lower bounds using $k=1$]{\includegraphics[width=2.5in]{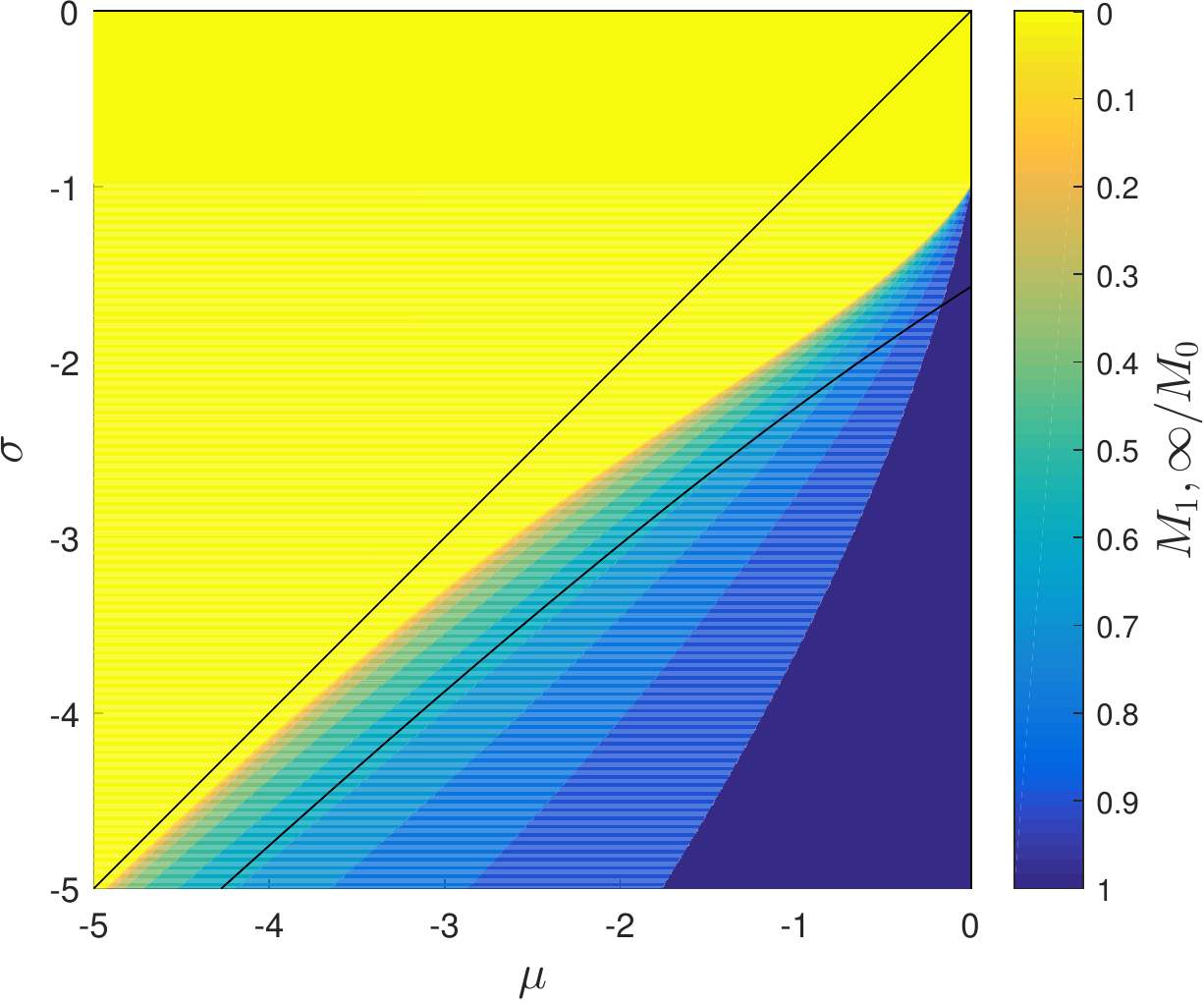}} \hspace{0.1in}
\subfigure[Upper bounds using $k=1$]{\includegraphics[width=2.5in]{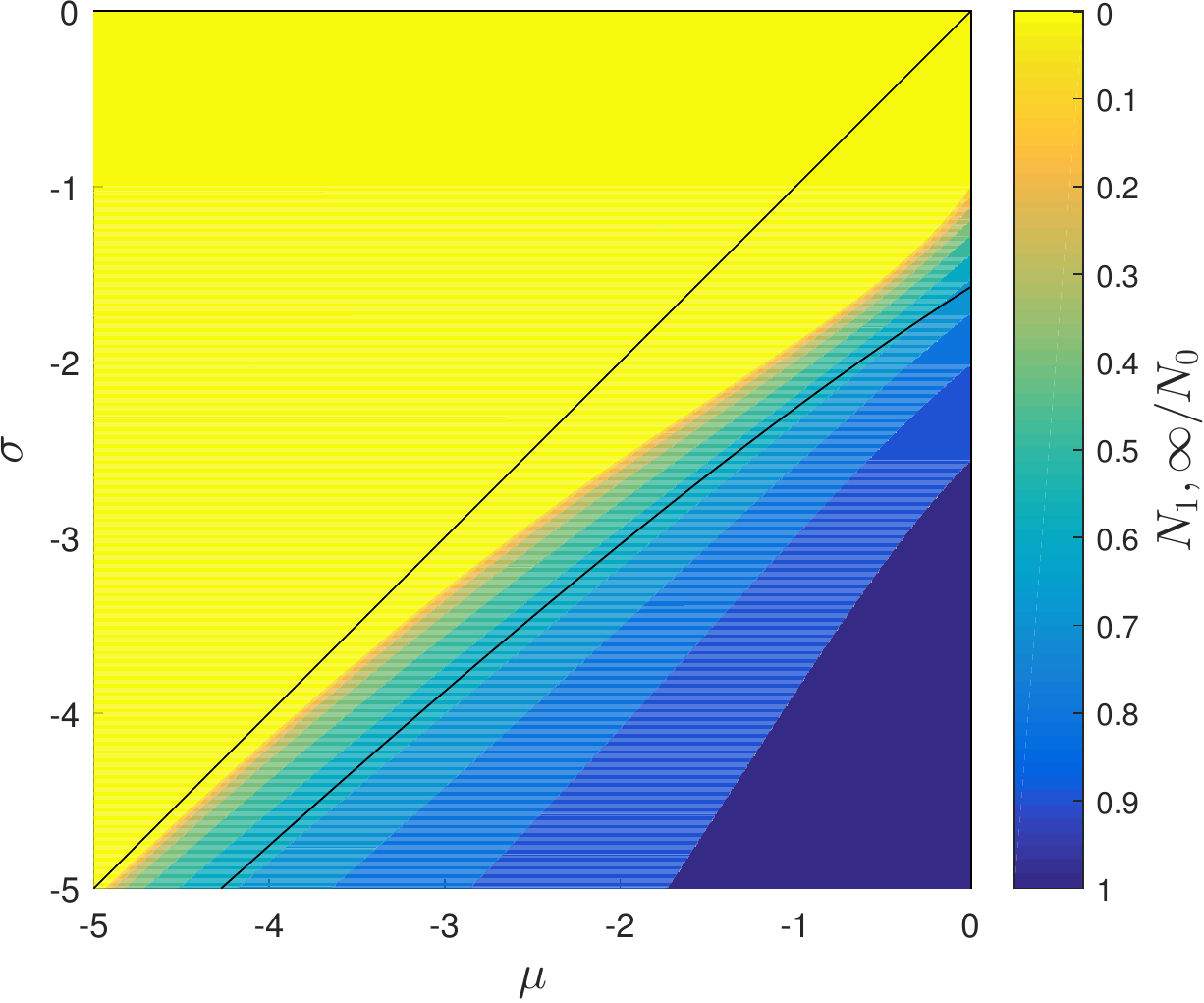}}
\vspace{0.2in}
\subfigure[Lower bounds using $k=2$]{\includegraphics[width=2.5in]{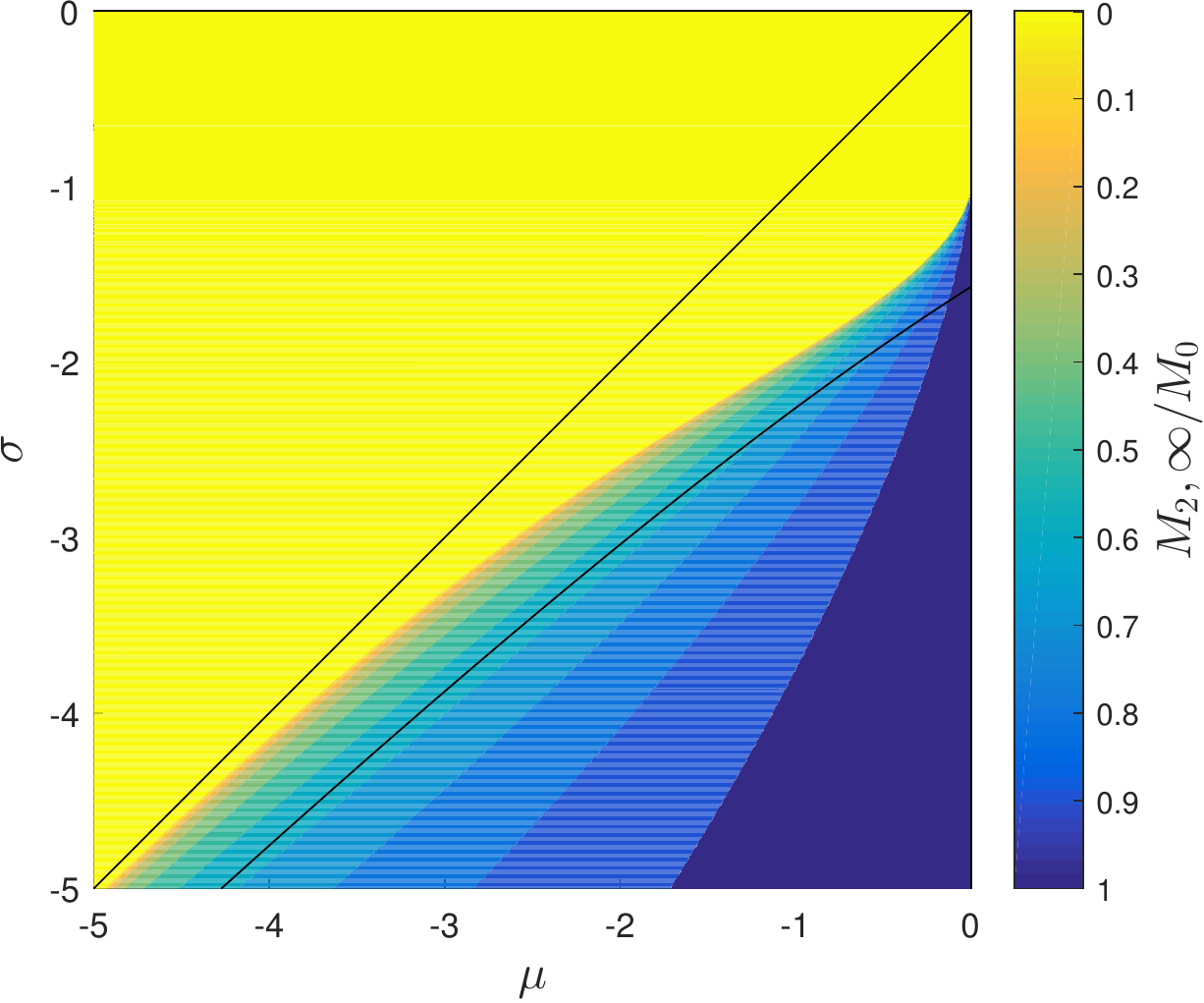}} \hspace{0.1in}
\subfigure[Upper bounds using $k=2$]{\includegraphics[width=2.5in]{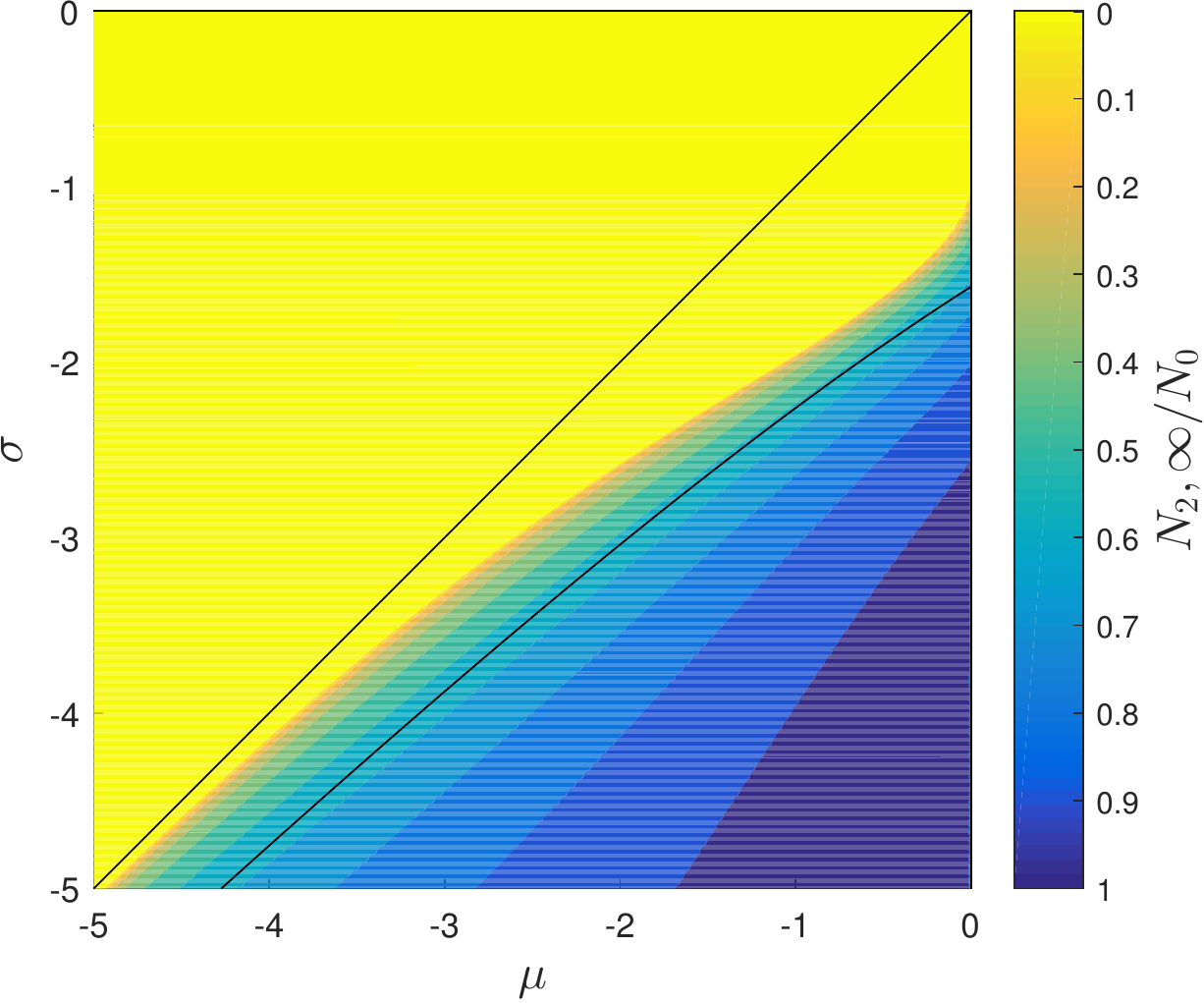}}
\caption{\label{Fig:BoundsPlane}Relative improvement in the lower and upper bounds of periodic solutions to \eqref{Eqn:1Delay} using the $k=1$ and $k=2$ Lyapunov-Razumikhin technique. The yellow region matches the region of global asymptotic stability indicated by the blue (for $k=1$) and red (for $k=2$) curves in Figure~\ref{Fig:GAS}.}
\end{center}
\end{figure}

In Figure~\ref{Fig:BoundsPlane} we present the improvement in the lower bounds ($M_{k,\infty}$) and upper bounds ($N_{k,\infty}$) of periodic solutions and other persistent dynamics of \eqref{Eqn:1Delay} relative to the initial lower bound $L_0$ and upper bound $M_0$, respectively, for $\mu<0$, $\lambda<0$. Results are shown for both $k=1$ and $k=2$.
The figure was drawn by calculating $M_{k,\infty}$ and $N_{k,\infty}$ at points in a $1000\times1000$ grid using equation \eqref{Eqn:Bifurcation_1Delayinfinity} and Matlab's nonlinear least squares solver.
The yellow region shows when $M_{k,\infty}=N_{k,\infty}=0$ which from Theorem~\ref{Thm:1Delay_changingbounds} implies global asymptotic stability of the steady-state solution.
This matches the region carved out by the blue (for $k=1$) and red (for $k=2$) curves in Figure~\ref{Fig:GAS}.

We note that within $\wedg$ for $\mu\ll0$ even though it has only been possible so far to prove asymptotic stability of the steady state by Lyapunov-Razumikhin techniques for a very thin sliver of parameter values close to the cone $\cone$, our generalised Lyapunov-Razumikhin techniques constrain the dynamics significantly with 
$|N_{k,\infty}-M_{k,\infty}|<0.6|N_0-M_0|$ across the entire width of the wedge $\wedg$ for $\mu\ll0$.
Near to the corner of $\wedg$, $(\mu,\sigma)=(0,-\pi/2)$, our generalised Lyapunov-Razumikhin techniques do not lead to a significant improvement on the lower bound with $M_{k,\infty}/M_0\approx1$, but there is a significant reduction in the upper bound with $N_{k,\infty}/N_0\approx0.6$. Below the wedge, where the steady state is unstable, the bounds constrain the amplitude of periodic orbits born in the Hopf bifurcation at the stability boundary, as already seen in Figure~\ref{Fig:Bounds}.

\section{Conclusions}
\label{Secn:Conclusions}
In this paper we generalised Lyapunov-Razumikhin techniques by considering solutions at local extrema rather than solutions about to escape some ball. This allowed us to derive sufficient conditions for global asymptotic stability of the steady-state of the model SDDE~\eqref{Eqn:1Delay},
expanding upon our previous work~\cite{MH:2016} where we derived Lyapunov stability and local asymptotic stability results for SDDEs using Lyapunov-Razumikhin techniques.

The required conditions for global asymptotic stability are easy to numerically verify for a given set of parameter values. The results are summarized in Theorem~\ref{Thm:cone} for $(\mu,\sigma)\in\cone$ and Theorem~\ref{Thm:GAS} for $(\mu,\sigma)\in\wedg$.
The region of parameter space where global asymptotic stability is assured is shown in Figure~\ref{Fig:GAS}(a).

It has already been shown that the zero solution for the state-dependent case ($c\ne 0$) of \eqref{Eqn:1Delay} is locally exponentially stable in the same parameter region $\Sigma_\star$ as that for the constant delay case ($c=0$)~\cite{GyoriHartung:1}.
In this paper we showed numerically (Figure~\ref{Fig:Bounds}) that in part of $\Sigma_\star$ the zero solution is not globally asymptotically stable, and our generalised Lyapunov-Razumikhin techniques enabled us to derive bounds on the persistent dynamics in that case.
Theorem~\ref{Thm:1Delay_changingbounds} provides bounds on solutions which can be used to bound periodic orbits and all other recurrent dynamics when the steady state is unstable, and also when the steady state is asymptotically stable, but Lyapunov-Razumikhin techniques are not able to establish global asymptotic stability.
An illustration of this technique and examples of the bounds we obtain on the persistent dynamics of \eqref{Eqn:1Delay} are shown in Figure~\ref{Fig:Bounds}.
The relative improvement in the lower and upper bounds of periodic solutions derived using the generalised Lyapunov-Razumikhin techniques of Theorem~\ref{Thm:1Delay_changingbounds} are shown in Figure~\ref{Fig:BoundsPlane}.


In this paper we focused on deriving bounds for the solutions of the model SDDE \eqref{Eqn:1Delay}, but this method can be extended to other scalar SDDE problems. In particular, our methods would apply directly to problems with delayed negative-feedback components of the following form,
$$\dot u(t) = R \bigl(u(t)\bigr) + S \bigl(  u(t-\tau(t,u(t)) \bigr),$$
where $S '(u)<0$. It is also possible to apply these techniques to problems with multiple delays, as was considered briefly in \cite{Humphries:1}. The difficulty in the case of multiple delays is that the analogue of \eqref{Eqn:Bounds:EtaConditionA} would only give a constraint on the delayed values of the solution, rather than giving the value directly, but we were already able to work with constraints on the delay in \cite{MH:2016} where we considered
$\tfrac{d}{dt}\|u(t)\|\geq0$ on the boundary of a ball, rather than $\udot(t)=0$ at a local extremum, so such difficulties are surmountable.


\section*{Acknowledgments}
A.R. Humphries is grateful to Tibor Krisztin, John Mallet-Paret, Roger Nussbaum and Hans-Otto Walther
for productive discussions and suggestions, and to NSERC (Canada) for funding
through the Discovery Award program. F.M.G. Magpantay is grateful to Jianhong Wu for helpful discussions, and to McGill University, York University, The Institut des Sciences Math\'ematiques (Montreal, Canada) and NSERC (Canada) for funding.

\def\bibsection{\section*{References}}





\end{document}